\newcommand{\crosss}{+(-1.4142pt,-1.4142pt) -- +(1.4142pt,1.4142pt) +(1.4142pt,-1.4142pt) -- +(-1.4142pt,1.4142pt) +(1.4142pt,-1.4142pt)}
\newcommand{\CC}{\mathbb{C}}
\newcommand{\DD}{\mathbb{D}}
\newcommand{\PP}{\mathbb{P}}
\newcommand{\ZZ}{\mathbb{Z}}
\newcommand{\cD}{\mathscr{D}}
\newcommand{\cG}{\mathscr{G}}
\newcommand{\cO}{\mathscr{O}}
\newcommand\loccit{loc.\kern3pt cit.{}\xspace}
\newcommand\cf{see\kern.3em}
\newcommand\eg{e.g.\kern.3em}
\newcommand\ie{i.e.,\ }
\let\moins\smallsetminus
\let\geq\geqslant
\let\wh\widehat
\let\wt\widetilde
\let\la\lambda
\let\epsilon\varepsilon
\newcommand{\lcr}{[\![}
\newcommand{\rcr}{]\!]}
\newcommand{\lpr}{(\!(}
\newcommand{\rpr}{)\!)}
\newcommand{\bbullet}{{\scriptscriptstyle\bullet}}
\newcommand{\cbbullet}{{\raisebox{1pt}{$\bbullet$}}}
\newcommand{\image}{\mathrm{im}}
\newcommand{\gr}{\mathrm{gr}}
\newcommand{\nb}{\mathrm{nb}}
\newcommand{\rc}{\mathrm{c}}
\newcommand{\rv}{\mathrm{v}}
\newcommand{\rd}{\mathrm{d}}
\newcommand{\rT}{\mathrm{T}}
\newcommand{\res}{\mathrm{res}}
\newcommand{\Cla}{\CC[\la]\langle\partial_\la\rangle}
\newcommand{\Czeta}{\CC[\zeta]\langle\partial_\zeta\rangle}
\newcommand{\Ozeta}{\cO_{\DD^c}[\zeta]\langle\partial_\zeta\rangle}
\newcommand{\Dzeta}{\cD_{\DD^c}[\zeta]\langle\partial_\zeta\rangle}
\newcommand{\FM}{{}^{\scriptscriptstyle\mathrm{L}\!}M}
\newcommand{\sfi}{\mathsf{i}}
\DeclareMathOperator{\diag}{diag}
\DeclareMathOperator{\id}{Id}
\DeclareMathOperator{\rk}{rk}
\DeclareMathOperator{\twopii}{2\pi\sfi}
\renewcommand\to{\mathchoice{\longrightarrow}{\rightarrow}{\rightarrow}{\rightarrow}}
\newcommand\mto{\mathchoice{\longmapsto}{\mapsto}{\mapsto}{\mapsto}}
\newcommand\hto{\mathrel{\lhook\joinrel\to}}
\newcommand{\isom}{\stackrel{\sim}{\longrightarrow}}
\let\oldbigoplus\bigoplus
\renewcommand{\bigoplus}{\mathop{\textstyle\oldbigoplus}\displaylimits}
\let\oldbigcup\bigcup
\renewcommand{\bigcup}{\mathop{\textstyle\oldbigcup}\displaylimits}
\let\oldcoprod\coprod
\renewcommand{\coprod}{\mathop{\textstyle\oldcoprod}\displaylimits}
\DeclareRobustCommand{\@pointrait}{\unskip\@addpunct{.}%
\mbox{}\ignorespaces}
\def\th@plain{%
\let\thm@indent\noindent
\thm@headfont{\fontfamily{ptm}\bfseries\itshape}%
\thm@notefont{\fontfamily{ptm}\bfseries\upshape}%
\thm@preskip.5\linespacing \@plus.5\linespacing
\thm@postskip\thm@preskip
\thm@headpunct{\bfseries\itshape\pointrait}
\itshape }
\let\oldtheequation\theequation
\def\numstareq{\let\oldtheequation\theequation\renewcommand{\theequation}{\oldtheequation$\,*$}}
\newenvironment*{starequation}{\numstareq\begin{equation*}\tag{\theequation}}{\end{equation*}\let\theequation\oldtheequation\ignorespaces}
\numberwithin{equation}{section}
\newtheorem{theoreme}[equation]{Theorem}
\newtheorem{lemme}[equation]{Lemma}
\newtheorem{proposition}[equation]{Proposition}
\newtheorem{corollaire}[equation]{Corollary}
\newtheorem*{criterion}{Criterion}
\theoremstyle{definition}
\newtheorem{remarque}[equation]{Remark}
\begin{document}
\frontmatter
\title[A short proof of a theorem of Cotti, Dubrovin and Guzzetti]{A short proof of a theorem of\\ Cotti, Dubrovin and Guzzetti}

\author[C.~Sabbah]{Claude Sabbah}
\address[C.~Sabbah]{CMLS, CNRS, École polytechnique, Institut Polytechnique de Paris, 91128 Palaiseau cedex, France}
\email{Claude.Sabbah@polytechnique.edu}
\urladdr{https://perso.pages.math.cnrs.fr/users/claude.sabbah/}

\subjclass{34M40, 32C38, 35A27}
\keywords{Laplace transformation, Stokes matrix, middle extension}

\begin{abstract}
We give a short proof of a theorem of G.\,Cotti, B.\,Dubrovin and D.\,Guzzetti (\cite{C-D-G18} and \cite{Guzzetti21}) asserting the vanishing of some entries of the Stokes matrices at coalescing points of an isomonodromic deformation.
\end{abstract}

\maketitle
{\let\\\relax\tableofcontents}
\mainmatter

\section*{Introduction}
A recent work \cite{Guzzetti21} of D.\,Guzzetti gives another approach to a result of G.\,Cotti, B.\,Dubrovin and D.\,Guzzetti in \cite{C-D-G18} that asserts the vanishing of some entries of the Stokes matrices at coalescing points of an isomonodromic deformation. We~recall the precise context in Section \ref{sec:shortproof}. The idea of D.\,Guzzetti is to exploit the property that the isomonodromic deformation in question can be obtained by Fourier-Laplace transformation from an isomonodromic deformation of a differential system with regular singularities.

Starting from this idea, we show how to recover the vanishing result by using the structure of such Stokes matrices as explained by Malgrange in \cite[Chap.\,XII]{Malgrange91} and proved in a topological way in \cite{D-H-M-S17}. We emphasize the property of intermediate (also called minimal, or~middle) extension of the differential system with regular singularity involved. Let us summarize our approach.
\begin{itemize}
\item
Given a differential system on the complex line with poles at a finite set, and having regular singularities including at infinity, we associate its minimal extension that we regard as a regular holonomic module on the Weyl algebra $\Cla$. Its~Laplace transform is a holonomic module on the complex line, having an irregular singularity at infinity. The result of Malgrange mentioned above enables one to compute representatives of Stokes matrices of the latter in terms of monodromies at finite distance of the former, and to show that the vanishing of some blocks of the Stokes matrices occurs when some relation between monodromies occurs. This general result is explained in Section \ref{sec:criterionvanishing}.
\item
This relation between monodromies has a dynamical interpretation when a universal isomonodromic deformation exists for the differential system with regular singularities. The parameter space is an open subset of the space of tuples of pairwise distinct singularities. Due to the simple geometry of the deformation space, one can interpret the relation between monodromies considered in the first point as the constancy of certain vanishing cycle sheaves on the singular set of the deformed family.

\item
A stationary phase result with parameter, as shown in \cite{D-S02a} (\cf Appendix \ref{app:B}), relates the formal decomposition at infinity of the Laplace transform of the regular holonomic module considered above with the vanishing cycle sheaves of the latter.

\item
These results lead to the following criterion, that we will only develop in the setting of Section \ref{sec:shortproof}. Let $U$ be a \emph{simply connected} open set in $\CC^n$ with coordinates $u_1,\dots,u_n$, and let $U^o$ be the complement of the diagonals $u_i=\nobreak u_j$ ($i\neq j\in\{1,\dots,n\}$). Let $X=U\times V$ be a neighborhood of $U\times\{z=0\}$ in $U\times\CC$ and set $X^o=U^o\times V$. Let $R_i$, for $i=1,\dots,n$, be a locally free $\cO_X$\nobreakdash-module with a flat logarithmic connection $\nabla_i$ having poles along~$U\times\{z=0\}$.

\begin{criterion}
Let $\cG$ be a locally free $\cO_X(*U)$-module with a flat connection. Assume that
\begin{enumerate}
\item
the formalization $(\wh G,\wh\nabla)=\cG_{\wh U}:=\cO_U\lpr z\rpr\otimes_{\cO_X}\cG$ decomposes as
\[
\bigoplus_{i=1}^n(R_i[z^{-1}],\nabla_i+\rd(u_i/z)),
\]
\item
There exists $u^o\!\in\!U^o$ such that $\cG_{|\{u^o\}\!\times\!V}$ is the the restriction to \hbox{$\{u^o\}\times V$} of the partial Laplace transform of a regular holonomic module on~$\CC_\lambda$ which is a minimal extension at each of its singularities at finite distance.
\end{enumerate}
If there exists a ``coalescing'' point $u^\rc$ in $U$ whose coordinates $u^\rc_i$ and $u^\rc_j$ coincide for some $i\neq j\in\{1,\dots,n\}$, then for each $u^o\in U^o$, the $(i,j)$- and $(j,i)$-entries of the Stokes matrices of $\cG_{|u=u^o}$ vanish.
\end{criterion}

The constancy of the vanishing cycle sheaves mentioned in the previous point is a consequence, via \cite{D-S02a}, of the constancy, for each $i$, of the local system on~$U$ attached to the flat residual connection of $(R_i,\nabla_i)$, as follows from the simple connectedness of $U$.

\item
In the setting of Section \ref{sec:shortproof}, according to \hbox{\cite[Prop.\,19.3]{C-D-G18}} (\cf Appendix \ref{app:A}), the first condition is satisfied. That the second one is also satisfied is proved by using the specific form of the connection matrix (\cf Lemma \ref{lem:point2}).
\end{itemize}

\section{A criterion for the vanishing of some entries of the Stokes matrices}\label{sec:criterionvanishing}

Let~$\CC_\la$ denote the complex line with coordinate $\la$ and let~\hbox{$u^o\!=\!(u_1^o,\dots,u_n^o)\!\in\!\CC^n$} having pairwise distinct coordinates. We denote by $\{u^o\}=\{u_1^o,\dots,u_n^o\}$ the corresponding subset of $\CC_\la$. Let~$L^o$ be a locally constant sheaf of finite rank $r$ on~$\CC_\la\moins \{u^o\}$. We will be mainly concerned with the perverse sheaf $j_*L^o$,\footnote{One usually shifts this sheaf by one, but we do not for the sake of simplicity.} where we denote by \hbox{$j:\CC_\la\moins\{u^o\}\hto\CC_\la$} the open inclusion. This sheaf is called the \emph{intermediate exten\-sion} of $L^o$ with respect to~$j$.

Let $(V^o,\nabla^o)$ be the meromorphic flat bundle on the affine line $\CC_\la$ with poles at~$u^o_i$ ($i=1,\dots,n$) and regular singularities there and at $\la=\infty$, whose sheaf of horizontal sections on $\CC_\la\moins u^o$ is the local system $L^o$. We can regard it as a meromorphic bundle on $\PP^1$ with poles at $u_1^o,\dots,u_n^o$ and $\infty$, equipped with a connection $\nabla^o$ having regular singularities there. We can also regard it as a regular holonomic (left) module over the Weyl algebra $\Cla$.

To $j_*L^o$ is associated, by the Riemann-Hilbert correspondence, a regular holonomic $\Cla$-module $(M^o,\nabla^o)$, that is called the \emph{middle extension} of $(V^o,\nabla^o)$.

Assume we are given a Fuchsian system with poles at $\{u^o\}$, and matrix\vspace*{-3pt}
\begin{equation}\label{eq:Fuchs}
B^o=\sum_{i=1}^n\frac{B_i^o}{\la-u_i^o}\,\rd\la.
\end{equation}
We regard it as the free $\CC[\la]$-module $E^o=\CC[\la]^r$ equipped with a logarithmic connection $\nabla^o$ whose matrix is given by the formula above. The associated meromorphic bundle with connection $(V^o,\nabla^o)$ is obtained by tensoring $E^o$ with the ring of rational functions $\CC[\la,((\la-u_i^o)^{-1})_{i=1,\dots,n}]$ having poles at most at $\{u^o\}$, that is,
\begin{equation}\label{eq:Vo}
V^o=\CC[\la,((\la-u_i^o)^{-1})_{i=1,\dots,n}]\otimes_{\CC[\la]}E^o,
\end{equation}
with connection naturally induced by $\nabla^o$ on $E^o$. We regard $(V^o,\nabla^o)$ as a left module over the Weyl algebra $\Cla$, and $(E^o,\nabla^o)$ is called a logarithmic lattice of~it. Although $(E^o,\nabla^o)$ is not a $\Cla$-module, it generates a $\Cla$-submodule $(M^o,\nabla^o)$ of $(V^o,\nabla^o)$ by setting\vspace*{-3pt}
\[
M^o=\sum_k(\nabla^o_{\partial_\la})^kE^o\subset V^o.
\]

The following lemma is standard.

\begin{lemme}\label{lem:middleext}
Assume that, for each $i=1,\dots,n$, the integral eigenvalues of $B_i^o$ are nonnegative. Then $(M^o,\nabla^o)$ is the middle extension of $(V^o,\nabla^o)$ at each of its poles.\qed
\end{lemme}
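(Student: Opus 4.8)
\emph{The plan} is to reduce the statement, one pole at a time, to a local question about logarithmic lattices, and to check there the two properties characterizing the middle extension: absence of a sub-, and of a quotient, module supported at the pole.

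By the Riemann--Hilbert correspondence, $(M^o,\nabla^o)$ is the middle extension of $(V^o,\nabla^o)$ exactly when $M^o$ has no nonzero sub- or quotient $\Cla$-module supported on a point of $\{u^o\}$, and this can be checked at each $u^o_i$ separately. So I would fix $i$, set $t=\la-u^o_i$, and formalize at $t=0$: one obtains a regular meromorphic connection on the formal punctured disk, with logarithmic lattice $\wh E=\CC\lcr t\rcr\otimes_{\CC[\la]}E^o$ of residue $B^o_i$, meromorphic bundle $\wh V=\wh E[t^{-1}]$, and formalization $\wh M:=\sum_k(\nabla_{\partial_t})^k\wh E\subseteq\wh V$ of $M^o$. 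It then suffices to show that $\wh M$ has no punctual sub- and no punctual quotient module.

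The submodule case is immediate: $\wh M$ lies in $\wh V$, which is $\CC\lcr t\rcr$-torsion free, whereas a submodule supported at $t=0$ would be torsion. For the quotient I would first split off the resonant part: grouping the eigenvalues of $B^o_i$ into their classes in $\CC/\ZZ$, the generalized-eigenspace decomposition of the residue lifts, the singularity being regular, to a $\nabla_{\partial_t}$-stable splitting $\wh E=\bigoplus_\tau\wh E_\tau$, whence $\wh V=\bigoplus_\tau\wh V_\tau$ and $\wh M=\bigoplus_\tau\wh M_\tau$. When $\tau\neq0$, the local monodromy on $\wh V_\tau$ has no eigenvalue $1$, so $\wh V_\tau$ is its own middle extension; as $\wh M_\tau$ and $\wh V_\tau$ agree on the punctured disk, $\wh V_\tau/\wh M_\tau$ is a punctual quotient of the middle extension $\wh V_\tau$, hence vanishes, and no punctual quotient arises there. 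There remains the unipotent block $\tau=0$, where the eigenvalues of $B^o_i$ lie in $\ZZ$, hence --- \emph{by the hypothesis of the lemma} --- in $\ZZ_{\geq0}$. This is exactly the input of the standard lattice criterion for minimal extensions: $\wh E_0$ is then contained in the Deligne lattice of $\wh V_0$ whose residue has eigenvalue $0$, the submodule generated by that Deligne lattice has no punctual quotient and agrees with $\wh V_0$ on the punctured disk, so it \emph{is} the middle extension of $\wh V_0$; consequently $\wh M_0$ is contained in it. As $\wh M_0$ is a submodule of $\wh V_0$ with $\wh M_0[t^{-1}]=\wh V_0$, it also contains the middle extension --- the smallest submodule of $\wh V_0$ with that property --- so the two coincide, and $\wh M$ has no punctual quotient.

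The genuinely delicate point is the bookkeeping inside the resonant block $\tau=0$: keeping track of the Jordan structure of $B^o_i$ while comparing the given lattice $\wh E_0$, the Deligne lattice, and the modules they generate. For this I would rely on the standard description of minimal extensions via logarithmic lattices and the Kashiwara--Malgrange $V$-filtration, \cf \cite{Malgrange91} and the topological treatment of \cite{D-H-M-S17}; everything else is formal.
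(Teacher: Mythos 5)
Your argument is correct. The paper states Lemma \ref{lem:middleext} as standard and offers no proof, so there is nothing to compare it with; what you write is precisely the standard argument being invoked: reduce to the formal germ at each $u_i^o$, split the logarithmic lattice according to the classes of the residue eigenvalues modulo $\ZZ$ (possible because the inter-class couplings can always be killed by a formal gauge transformation), observe that the non-resonant summands are their own middle extensions, and use the nonnegativity hypothesis only on the resonant block, where it forces the lattice into the Deligne lattice with nilpotent residue, whose $\Cla$-span is the middle extension. The one step you defer to the literature --- that a logarithmic lattice whose residue eigenvalues on the unipotent part are nonnegative integers is contained in that Deligne lattice --- is a short check (if the lattice escaped into $t^{-k}$ times the Deligne lattice for some $k\geq1$, its image there would exhibit $-k$ as a residue eigenvalue of the lattice, contradicting the hypothesis), and together with the fact that any $\Cla$-submodule of $V^o$ localizing to $V^o$ contains the middle extension, it closes the argument.
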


We consider the Laplace (also called Fourier) transformation with kernel $e^{\la\zeta}$. The Laplace transform $(\FM^o,\nabla^o)$ of $(M^o,\nabla^o)$ is the $\Czeta$-module obtained as follows. The $\CC$-vector space $\FM^o$ is equal to $M^o$. The action of $\zeta$ is defined as that of $\partial_\la$, and the action of $\partial_\zeta$ is defined as that of~$-\la$. Classical results show that $\FM^o$ is a holonomic $\Czeta$-module with a regular singularity at $\zeta=0$ and an irregular singularity at $\zeta=\infty$.

Tensoring this Laplace transform $\FM^o$ with $\CC[\zeta,\zeta^{-1}]$ over $\CC[\zeta]$ and setting $z=\zeta^{-1}$, we obtain a meromorphic bundle with connection $(G^o,\nabla^o)$ on the affine line~$\CC_z$ with pole at $z=0$ and an irregular singularity there (and a regular singularity at $z=\infty$). In other words, $(G^o,\nabla^o)$ is the \emph{localized Laplace transform} of $(M^o,\nabla^o)$ with respect to the Laplace kernel $e^{\la/z}$.

The \emph{formal stationary phase formula} describes the formalized connection
\[
(\wh G^o,\wh\nabla^o):=\CC\lpr z\rpr\otimes_{\CC[z]}(G^o,\nabla^o).
\]
There exists a $\CC\lpr z\rpr$-basis of $\wh G^o$ in which the matrix of $\wh\nabla^o$ is block-diagonal, with blocks of the form
\begin{equation}\label{eq:staphase}
u_i^o\id _{r_i} \rd(1/z)+C_i\,\rd z/z,\quad i=1,\dots,n,
\end{equation}
where $C_i$ is a constant matrix. Correspondingly, there is a pair $(S_+^o,S_-^o)$ of Stokes matrices that enables one to recover, up to isomorphism, $(G^o,\nabla^o)$ from $(\wh G^o,\wh\nabla^o)$. A~topological computation of $(S_+^o,S_-^o)$ from a presentation of $j_*L^o$ is given in \hbox{\cite[Th.\,5.4]{D-H-M-S17}}, adapting the more analytic approach in \cite[Chap.\,XII]{Malgrange91}. We recall it here in the special case of the intermediate extension $j_*L^o$.

We fix an \emph{$u^o$-admissible} argument $\theta^o$ in~$\CC_\la$, in the sense that the closed real half-lines~$\ell_i$ with this direction starting from $u_i^o$ does not contain $u_j^o$ for $j\neq i$. We set $\ell^o=\bigcup_i\ell_i$. We denote by~$D_i$ the maximal open strip in that direction containing $u_i^o$ and no other~$u_j^o$. See Figure \ref{fig:1}.
\begin{figure}[htb]
\begin{center}
\begin{tikzpicture}[scale=.9,
	background rectangle/.style={draw=white,dashed,fill=white},
	show background rectangle]
\begin{scope}
\clip (-1.5,-1.2) rectangle (1.8, 1.5);
\coordinate (C1) at (-1,-.2);
\coordinate (C1a) at (-.5,.3);
\coordinate (C2) at (0,.5);
\coordinate (C2a) at (.5,.1);
\coordinate (C3) at (1,.3);
\draw (C1) \crosss;
\draw (C2) \crosss;
\filldraw[fill=black, draw=black] (C1a) circle (2pt) node[below]{$u_i^o$};
\draw (C3) \crosss node[below]{$u_j^o$};
\draw[thick] (C1a) -- node[right]{$\ell_i$}++(0,1.3);
\draw (C2a) \crosss;
\end{scope}
\end{tikzpicture}
\quad
\begin{tikzpicture}[scale=.9,
	background rectangle/.style={draw=white,dashed,fill=white},
	show background rectangle]
\begin{scope}
\clip (-1.5,-1.2) rectangle (1.8, 1.5);
\coordinate (C1) at (-1,-.2);
\coordinate (C1a) at (-.5,.3);
\coordinate (C2) at (0,.5);
\coordinate (C2a) at (.5,.1);
\coordinate (C3) at (1,.3);
\filldraw[fill=black!20, draw=black!20] (-1,-2) rectangle (0,2);
\filldraw[fill=white, draw=white] (C1a)++(-.02,0) rectangle (-.48,2);
\filldraw[fill=white, draw=white] (C1a)++(-.02,-.02) rectangle ++(.04,2);
\filldraw[fill=white, draw=white, ultra thin] (C1) circle (2pt);
\filldraw[fill=white, draw= white] (C1a) circle (2pt) node[below]{$u_i^o$};
\filldraw[fill=white, draw= white] (C2) circle (2pt);
\draw (C3) \crosss node[below]{$u_j^o$};
\draw (C2a) \crosss;
\draw (-.5,-.7) node {$D_i \smallsetminus\ell_i$};
\end{scope}
\end{tikzpicture}
\quad
\begin{tikzpicture}[scale=.9,
	background rectangle/.style={draw=white,dashed,fill=white},
	show background rectangle]
\begin{scope}
\clip (-1.5,-1.2) rectangle (1.8, 1.5);
\coordinate (C1) at (-1,-.2);
\coordinate (C1a) at (-.5,.3);
\coordinate (C2) at (0,.5);
\coordinate (C2a) at (.5,.1);
\coordinate (C3) at (1,.3);
\filldraw[fill=black!20, draw=black] (-2,-2) rectangle (2,2);
\filldraw[fill=white, draw= white] (C1)++(-.02,-.02) rectangle ++(.04,2);
\filldraw[fill=white, draw= white] (C1a)++(-.02,-.02) rectangle ++(.04,2);
\filldraw[fill=white, draw= white] (C2)++(-.02,-.02) rectangle ++(.04,2);
\filldraw[fill=white, draw= white] (C2a)++(-.02,-.02) rectangle ++(.04,2);
\filldraw[fill=white, draw= white] (C3)++(-.02,-.02) rectangle ++(.04,2);
\filldraw[fill=white, draw= white] (C1) circle (2pt);
\filldraw[fill=white, draw= white] (C1a) circle (2pt);
\filldraw[fill=white, draw= white] (C2) circle (2pt);
\filldraw[fill=white, draw= white] (C2a) circle (2pt);
\filldraw[fill=white, draw= white] (C3) circle (2pt);
\draw (.2,-.7) node {$\CC_\la \smallsetminus \ell^o$};
\end{scope}
\end{tikzpicture}
\end{center}
\caption{\label{fig:1}}
\end{figure}

To $j_*L^o$ and the choice of a general argument $\theta^o$ as above (that we omit in the notation) is attached a quiver consisting of finite-dimensional vector spaces and linear maps between them. The quiver takes the form $(\Psi^o,\Phi^o_{i=1,\dots,n},\rc_i,\rv_i)$ and, due to the special case we consider here, it is obtained as follows (\cf\cite[Lem.\,4.8]{D-H-M-S17}):
\begin{itemize}
\item
$\Psi^o=H^2_\rc(\CC_\la\moins\ell^o,L^o)$,
\item
$\Psi^o_i=H^1_\rc(\ell_i^*,L^o)$, with $\ell_i^*:=\ell_i\moins\{u_i^o\}$, and monodromy $\rT_{ii}$,
\item
$h_i:\Psi^o_i\isom\Psi^o$ is an isomorphism obtained through $(4.6)_c$ in \loccit, and $\rT_i=h_i\circ\rT_{ii}\circ h_i^{-1}:\Psi^o\to\Psi^o$ is the $i$-th monodromy on $\Psi^o$,
\item
$\Phi^o_i=\image(\id-\rT_{ii})\subset\Psi^o_i$, $\rc_{ii}= (\id-\rT_{ii}) :\Psi^o_i\to\Phi^o_i$ and $\rv_{ii}:\Phi^o_i\hto\Psi^o_i$ is the natural inclusion,
\item
Last, $\rc_i=\rc_{ii}\circ h_i^{-1}$, $\rv_i=h_i\circ \rv_{ii}$.
\end{itemize}
We note that $\rv_i:\Phi^o_i\to\Psi^o$ is injective and its image is identified with $\image(\id-\rT_i)$. The quiver is thus isomorphic to the quiver
\[
(\Psi^o,\phi^o_{i=1,\dots,n},\rc_i,\rv_i),\quad \begin{cases}\phi^o_i=\image(\id-\rT_i),\\ \rc_i=(\id-\rT_i):\Psi^o\to\Psi^o,\\ \rv_i=\text{inclusion}:\phi^o_i\hto\Psi^o.
\end{cases}
\]

With this notation, \cite[Th.\,5.4]{D-H-M-S17} asserts (following \cite[Chap.\,XII]{Malgrange91}) that there exists a pair $(S_+^o,S_-^o)$ of Stokes matrices for $G^o$ which are decomposed into blocks $(i,j)$ with $i,j=1,\dots,n$, such that the non-diagonal blocks $(i,j)$ and $(j,i)$ read
\begin{itemize}
\item
$\rc_j\circ\rv_i$ and $0$ for $S_+^o$,
\item
$0$ and $-\rc_i\circ\rv_j$ for $S_-^o$.
\end{itemize}

\begin{corollaire}\label{cor:vanishingStokes}
With these assumptions, for $i\neq j\in\{1,\dots,n\}$, the above representative $(S_+^o,S_-^o)$ of Stokes matrices for $G^o$ has vanishing blocks $(i,j)$ and $(j,i)$ if and only if
\[
(\id-\rT_j)_{|\image(\id-\rT_i)}=0\quad\text{and}\quad (\id-\rT_i)_{|\image(\id-\rT_j)}=0.\eqno\qed
\]
\end{corollaire}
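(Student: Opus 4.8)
The plan is that this corollary is an immediate reformulation of the block description of $(S_+^o,S_-^o)$ recalled just before the statement, once it is read through the isomorphism of quivers $(\Psi^o,\Phi^o_i,\rc_i,\rv_i)\simeq(\Psi^o,\phi^o_i,\rc_i,\rv_i)$ that was set up above. So the ``proof'' is just a matter of unwinding the definitions, with one small bookkeeping observation.

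First I would observe that, among the four blocks sitting in positions $(i,j)$ and $(j,i)$ of the pair $(S_+^o,S_-^o)$, exactly two are identically zero by construction, namely the $(j,i)$-block of $S_+^o$ and the $(i,j)$-block of $S_-^o$. Hence the joint vanishing of all four is equivalent to the vanishing of the two remaining ones: the $(i,j)$-block $\rc_j\circ\rv_i$ of $S_+^o$ and the $(j,i)$-block $-\rc_i\circ\rv_j$ of $S_-^o$; and of course $-\rc_i\circ\rv_j=0$ if and only if $\rc_i\circ\rv_j=0$.

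Then I would substitute the simplified quiver: there $\rv_i$ is literally the inclusion $\phi^o_i=\image(\id-\rT_i)\hto\Psi^o$ and $\rc_j$ is $(\id-\rT_j)\colon\Psi^o\to\Psi^o$, so the composite $\rc_j\circ\rv_i$ is exactly $(\id-\rT_j)_{|\image(\id-\rT_i)}$, and symmetrically $\rc_i\circ\rv_j=(\id-\rT_i)_{|\image(\id-\rT_j)}$. Combining the two steps yields the asserted equivalence. There is essentially nothing to overcome here: all the real work has already been done in establishing the block formulas for $(S_+^o,S_-^o)$ via \cite[Th.\,5.4]{D-H-M-S17}, and the only point worth recording is the harmless remark that the two structurally zero blocks impose no condition, so that the problem reduces to the two displayed restriction maps being zero.
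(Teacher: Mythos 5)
Your proposal is correct and matches the paper's (implicit) argument exactly: the corollary is stated with a \qed because it is precisely the unwinding you describe, namely that the two structurally zero blocks impose no condition and the remaining blocks $\rc_j\circ\rv_i$ and $-\rc_i\circ\rv_j$ become, under the quiver isomorphism, the restrictions $(\id-\rT_j)_{|\image(\id-\rT_i)}$ and $(\id-\rT_i)_{|\image(\id-\rT_j)}$. Nothing is missing.
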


\section{Interpretation in terms of constancy of the vanishing cycle sheaf}\label{sec:interpretation}

If we vary $u^o\in\CC^n$ along a path $u(t)$ ($t\in[0,1]$) with the condition that $u_i(t)$ remain pairwise distinct, the local system~$L^o$ deforms in a unique way as a family of local systems~$L^t$ on $\CC_\la\moins\{u(t)\}$: this is obtained by an argument on the fundamental group (since the path is simply connected). Assume now that the limit point $u(1)$ has components which coincide, a property that the authors of \cite{C-D-G18} call \emph{coalescence}. The behaviour near such a point is better seen within a geometric setup.

Let us consider the space $\CC^n\times\CC_\la$ with coordinates $u_1,\dots,u_n,\la$ and the projection \hbox{$p:(u_1,\dots,u_n,\la)\mto(u_1,\dots,u_n)$}. Let us also consider the following hypersurfaces:
\begin{itemize}
\item
the hyperplanes $H_i=\{\la-u_i=0\}$ of $\CC^n\times\CC$ on the one hand,
\item
the hyperplanes $\Delta_{i,j}=\{u_i=u_j\}$ ($i\neq j$) of $\CC^n$ on the other hand, that are lifted to $\CC^n\times\CC$ as $\Delta_{i,j}\times\CC$.
\end{itemize}
The union of these are respectively denoted by $H$ and $\Delta$ (and $\Delta\times\CC$). We note that~$H$ is a divisor with normal crossings in $\CC^n\!\times\!\CC$ whose singular set projects bijectively~to~$\Delta$.

Let~$u^c$ be a fixed coalescing point, that is, a point on~$\Delta$. It defines a partition $\{1,\dots,n\}=\bigsqcup_{a=1}^mI^c_a$ such that $u^c_i=u^c_j$ if and only if~$i$ and~$j$ belong to the same subset $I^c_a$ and we denote by $u^c_a$ this common value. We~decompose correspondingly~$\CC^n$ as $\prod_{a=1}^m\CC^{I^c_a}$.

Let us fix an $u^c$-admissible argument $\theta^c$ and open subsets $D_a^c\subset\CC_\la$ ($a=1,\dots,m$) as in Figure \ref{fig:1}. The open subset $\DD_a^c$ of $\CC^{I^c_a}$ consisting of points whose coordinates belong to~$D_a^c$ is the product of the open strips $D_a^c$ in each coordinate plane, hence is homeomorphic to $\CC^{I^c_a}$. We then set $\DD^c\!=\!\prod_{a=1}^m\DD_a^c$, and we have \hbox{$\DD^c\!\moins\!\Delta\!=\!\prod_{a=1}^m(\DD_a^c\!\moins\!\Delta_a)$}. The disjoint union
\[
\coprod_{a=1}^m(\DD^c\times D_a^c)
\]
is an open neighbourhood of $(u^c,\{u^c\})$ in $\DD^c\times\CC_\la$ and
\[
H\cap(\DD^c\times D_a^c)=H_a\times\prod_{b\neq a}\DD^c_b,
\]
where $H_a\subset\DD^c_a\times D_a^c$ is defined by $\prod_{i\in I_a^c}(\la-u_i)=0$. We now call $H_i$ the intersection of $\{\la=u_i\}$ with $\DD^c\times\CC_\la$. For $i=1,\dots,n$, let $v_i(u,\la)=(\la-u_i):\DD^c\times\CC_\la\to\CC$ be the defining function of $H_i$. Then $H=\bigcup_iH_i$ is a normal crossing divisor in $\DD^c\times\CC_\la$. We also set $X=\DD^c\moins\Delta$ and denote by $j:(X\times\CC_\la)\moins H\hto X\times\CC_\la$ the open inclusion. Note that the intersection of $H$ with $X\times\CC_\la$ is smooth with components $H_i\cap(X\times\CC_\la)$, because the singular set of $H$ projects to $\Delta$. Also, the complement in~$H_i$ of $H_i\cap(X\times\CC_\la)$ is a normal crossing divisor in $H_i$.

Let $L'$ be a locally constant sheaf on $(\DD^c\times\CC_\la)\moins H$ and let $L$ denote its restriction to $(X\times\CC_\la)\moins H$. The \emph{nearby cycle complex} $\psi_{v_i}(L')$ is a complex of sheaves on $H_i$ equipped with an automorphism $T_i(\psi):\psi_{v_i}(L')\to\psi_{v_i}(L')$. By restricting over $X$, we obtain a locally constant sheaf $\psi_{v_i}(L)$ on $H_i\cap(X\times\CC_\la)$ equipped with the automorphism $T_i(\psi)$. For any $u^o\in X$, we denote by $L^o$ the restriction of $L$ to~\hbox{$u^o\times(\CC_\la\moins\{u^o\})$}.

The sheaf $j_*L$ on $X\times\CC_\la$ is a perverse sheaf (up to a shift) and the vanishing cycle sheaf $\phi_{v_i}(j_*L)$ is a locally constant sheaf on $H_i\cap(X\times\CC_\la)$: this is the sheaf\vspace*{-3pt}
\begin{equation}\label{eq:phi}
\image(\id-T_i(\psi)):\psi_{v_i}(L)\to\psi_{v_i}(L)
\end{equation}
equipped with the automorphism $T_i(\phi)$ induced by $T_i(\psi)$.

\begin{proposition}\label{prop:extjL}
With these notations, assume that for each $i=1,\dots,n$, the vanishing cycle local system $\phi_{v_i}(j_*L)$ on $H_i\cap(X\times\CC_\la)$ is constant. Then, for any $u^o\in X$, for any $a=1,\dots,m$ and for any pair $i\neq j\in I_a^c$, the Stokes matrices of~$G^o$ considered in Section \ref{sec:criterionvanishing} with respect to any argument~$\theta^o$ close enough to $\theta^c$ have their $(i,j)$- and $(j,i)$-blocks equal to zero.\end{proposition}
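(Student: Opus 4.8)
The plan is to deduce the statement from Corollary~\ref{cor:vanishingStokes}, by reading the two relations occurring there as the triviality of the monodromy of the two constant local systems $\phi_{v_i}(j_*L)$ and $\phi_{v_j}(j_*L)$ around the coalescence hypersurface $\Delta_{i,j}=\{u_i=u_j\}$.

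Fix $a$, a pair $i\neq j\in I^c_a$, a point $u^o\in X$ and an argument $\theta^o$ close to $\theta^c$, so that the quiver $(\Psi^o,\phi^o_i,\rc_i,\rv_i)$ of Section~\ref{sec:criterionvanishing}, the monodromies $\rT_1,\dots,\rT_n$ on $\Psi^o$ and the representative $(S^o_+,S^o_-)$ of the Stokes matrices of $G^o$ are all defined. By Corollary~\ref{cor:vanishingStokes} it is enough to prove that
\[
(\id-\rT_j)_{|\image(\id-\rT_i)}=0\quad\text{and}\quad(\id-\rT_i)_{|\image(\id-\rT_j)}=0 .
\]
Since $\DD^c\moins\Delta=\prod_{b=1}^m(\DD^c_b\moins\Delta_b)$, the set $\Delta_{i,j}$ is one of the components of $\Delta$, and $H_i\cap\{u_i=u_j\}$ — the locus $\lambda=u_i=u_j$, which is also contained in $H_j$ — is one of the components of the normal crossing divisor $H_i\moins\bigl(H_i\cap(X\times\CC_\la)\bigr)$. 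By the construction of the quiver recalled in Section~\ref{sec:criterionvanishing} and \cite[Lem.\,4.8]{D-H-M-S17}, the stalk at $(u^o,u_i^o)$ of the local system $\phi_{v_i}(j_*L)$ on $H_i\cap(X\times\CC_\la)$ is identified, by means of $h_i$ and compatibly with all monodromies, with $\image(\id-\rT_i)\subset\Psi^o$, the automorphism $T_i(\psi)$ of $\psi_{v_i}(L)$ corresponding to $\rT_{ii}$, hence to $\rT_i$ after transport by $h_i$.

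The heart of the argument is the computation of the monodromy of the local system $\phi_{v_i}(j_*L)$ along a small loop of $H_i\cap(X\times\CC_\la)$ encircling $\{u_i=u_j\}$. I would realize it, near a point of $H_i\cap\{u_i=u_j\}$, by letting $u_j$ run once around $u_i^o$ along a circle of radius $\rho\ll\min_{k\neq i,j}|u_i^o-u_k^o|$, all other coordinates staying fixed; on the nearby cycle side this amounts to transporting the stalk of $L$ at a point with $\lambda=u_i^o-\eta$, $0<|\eta|\ll\rho$, along the corresponding loop $\gamma$ in $(X\times\CC_\la)\moins H$, which lies in the complex line $L_\eta=\{\lambda=u_i^o-\eta,\ u_k=u_k^o\ (k\neq j)\}$ parametrized by $u_j$. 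On $L_\eta$ the only singular points of $L$ near $u_j=u_i^o$ are $u_j=u_i^o$ (where $L_\eta$ meets $\Delta_{i,j}$) and $u_j=u_i^o-\eta$ (where it meets $H_j$), so $\gamma$, being a circle surrounding both, is homotopic to the product of a small meridian of $H_j$ and a small meridian of $\Delta_{i,j}$. The latter bounds a small disk contained in $(\DD^c\times\CC_\la)\moins H$, over which the locally constant sheaf $L'$ — of which $L$ is the restriction — is defined; hence the monodromy of $L$ around $\Delta_{i,j}$ is trivial, and the monodromy along $\gamma$ reduces to the monodromy of $L$ around $H_j$. After the identification of the previous paragraph and the bookkeeping of \cite[\S4]{D-H-M-S17}, this is the restriction of $\rT_j$ to $\image(\id-\rT_i)$. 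By the hypothesis, the monodromy of $\phi_{v_i}(j_*L)$ along $\gamma$ is the identity, whence the first relation; exchanging the roles of $i$ and $j$ and using the constancy of $\phi_{v_j}(j_*L)$ gives the second. Corollary~\ref{cor:vanishingStokes} then yields the vanishing of the $(i,j)$- and $(j,i)$-blocks of $(S^o_+,S^o_-)$.

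The delicate point I expect is this last identification: one must check that the description of $\Psi^o$ and of the monodromies $\rT_k$ in terms of the half-lines $\ell^o$ is compatible with the purely local nearby/vanishing cycle picture near the codimension-two stratum $H_i\cap H_j$, so that the monodromy of $L$ around $H_j$, viewed on the stalk of $\psi_{v_i}(L)$ and transported by $h_i$, acts on the subspace $\image(\id-\rT_i)$ in exactly the same way as $\rT_j$ does — a priori these two automorphisms of $\Psi^o$ agree only up to conjugacy. Concretely this amounts to following the isomorphisms $h_i$ of \cite[\S4]{D-H-M-S17} through the degeneration $u_j\to u_i$, and is the only non-formal ingredient of the proof.
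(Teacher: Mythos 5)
Your argument is correct and is essentially the paper's: both reduce to Corollary~\ref{cor:vanishingStokes} by showing that constancy of $\phi_{v_i}(j_*L)$ forces its monodromy around $H_i\cap H_j$, namely $\rT_j{}_{|\image(\id-\rT_i)}$, to be the identity (and symmetrically in $i,j$). The paper packages your loop computation, together with the ``delicate point'' you flag, by observing that near $u^c_a$ the pair $(\DD^c_a\times D^c_a,H_a)$ is homeomorphic to $(\CC^{I^c_a}\times\CC_\la,\{\prod_{i\in I^c_a}v_i=0\})$, whose complement has abelian fundamental group, so that $L'$ is exactly the data of the commuting automorphisms $(\Psi^o,\rT_{i\in I^c_a})$ and $\phi_{v_i}(j_*L)$ is literally $(\image(\id-\rT_i),\rT_{j\neq i})$.
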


\begin{proof}
We first notice that, $u^o$ being fixed in $X$, if $\theta^c$ is not $u^o$-admissible, then any $\theta^o\neq\theta^c$ close enough to $\theta^c$ is $u^o$-admissible. The result does not depend on the choice of $\theta^o$.

The question is local at each $u^c_a$. Recall that $v_i\!=\!\la-u_i$. The open set $(\DD^c_a\times D_a^c)\moins H_a$ is homeomorphic to $(\CC^{I_a^c}\times\CC_\la)\moins\{\prod_{i\in I_a^c}v_i\!=\!0\}$ with coordinates $(v_{I_a^c},\la)$. We are thus considering a locally constant sheaf $L'$ on the complement of coordinate hyperplanes $v_i=0$ in $\CC^{I_a^c}\times\CC_\la$. Giving $L'$ is thus equivalent to giving a vector space $\mathrm{L}$ equipped with automorphisms $\rT_i$ ($i\!\in\!I_a^c$). We can choose $(\Psi^o,\rT_{i\in I_a^c})$ defined in Section \ref{sec:criterionvanishing} as such data.

With such a representation, the locally constant sheaf $\phi_{v_i}(j_*L)$ on $H_i\cap(X\times\CC_\la)$ is represented by the vector space with automorphisms $(\image(\id-\rT_i),\rT_{j\neq i})$. Furthermore, the automorphism $T_i(\phi)$ of $\phi_{v_i}(j_*L)$ corresponds to the automorphism induced by $\rT_i$. That $\phi_{v_i}(j_*L)$ is constant is equivalent to the property that each $\rT_{j\neq i}$ is the identity on $\image(\id-\rT_i)$. We conclude with Corollary \ref{cor:vanishingStokes}.
\end{proof}

\section{A short proof of a theorem of Cotti, Dubrovin and Guzzetti}\label{sec:shortproof}
We keep the setting and notation of Section \ref{sec:interpretation}: we fix a coalescing point $u^c\in\Delta$ and a point $u^o$ in $\DD^c\moins\Delta$.

We consider the trivial $\CC[z]$-module $F^o$ of rank~$n$ equipped with the connection~$\nabla^o$ having matrix
\begin{equation}\label{eq:Ao}
A^o=\Bigl(\frac{\Lambda^o}{z}+A_\infty^o\Bigr)\,\frac{\rd z}{z},\quad\text{where }\Lambda^o:=\diag(u_1^o,\dots,u_n^o).
\end{equation}
We assume that the only possible integral eigenvalues of $A_\infty^o$ are $\geq1$ and no diagonal entry of $A_\infty^o$ is an integer (this can be achieved by adding $c\id_n\rd z/z$ to~$A_\infty^o$ for a suitable $c\in\CC$, and Theorem \ref{th:CDG} is insensitive to this modification). The inverse Laplace (or Fourier) lattice $(E^o,\nabla^o)$ (\cf \cf \cite[Prop.\,V.2.10]{Bibi00}) is $F^o$ regarded as a $\CC[\la]$\nobreakdash-module, where $\la$ acts as $z^2\partial_z$. By the first assumption on $A_\infty^o$, it is free of rank $n$, with the same canonical basis as $F^o$, and the matrix of $\nabla^o$ in this basis is
\begin{equation}\label{eq:Bo}
B^o=(A_\infty^o-\id_n)(\la\id_n-\Lambda^o)^{-1}\rd\la,
\end{equation}
which takes the Fuchsian form \eqref{eq:Fuchs}.

\begin{lemme}\label{lem:point2}
The $\Cla$-submodule of $(V^o ,\nabla^o)$ generated by $E^o$ is the middle extension $(M^o ,\nabla^o)$ of $(V^o ,\nabla^o)$, whose localized Laplace transform $(G^o ,\nabla^o)$ is equal to $\wt G^o:=\CC[z,z^{-1}]\otimes_{\CC[z]}F^o$ with connection having matrix $A^o$.
\end{lemme}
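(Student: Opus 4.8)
The plan is to treat the two assertions separately. The first is precisely of the form covered by Lemma~\ref{lem:middleext}, so it suffices to verify that hypothesis for the Fuchsian matrix $B^o$ of~\eqref{eq:Bo}. The second is obtained by tracing through the definition of the localized Laplace transform, keeping track of how the lattice $E^o$ was built from $F^o$.

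For the first assertion, I would compute the residues of $B^o=(A_\infty^o-\id_n)(\la\id_n-\Lambda^o)^{-1}\,\rd\la$. As $\Lambda^o$ is diagonal, $(\la\id_n-\Lambda^o)^{-1}=\diag\bigl((\la-u_1^o)^{-1},\dots,(\la-u_n^o)^{-1}\bigr)$, so the residue at $\la=u_i^o$ is $B_i^o=(A_\infty^o-\id_n)E_{ii}$, where $E_{ii}$ denotes the elementary matrix with a single nonzero entry, equal to~$1$, in position~$(i,i)$. This matrix has rank $\leq1$ and trace $(A_\infty^o)_{ii}-1$, hence its eigenvalues are $(A_\infty^o)_{ii}-1$ and $0$ (the latter with multiplicity $n-1$). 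By the standing assumption that no diagonal entry of $A_\infty^o$ is an integer, $(A_\infty^o)_{ii}-1\notin\ZZ$, so the only integral eigenvalue of $B_i^o$ is $0$, which is nonnegative. Lemma~\ref{lem:middleext} then shows that the $\Cla$-submodule $(M^o,\nabla^o)$ of $(V^o,\nabla^o)$ generated by $E^o$ is the middle extension of $(V^o,\nabla^o)$ at each of its poles. I expect this residue-and-eigenvalue computation, which is the only place where the hypotheses on $A_\infty^o$ genuinely enter, to be the substantive step; the rest is bookkeeping.

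For the second assertion, recall that under the Laplace transformation one has $z=\zeta^{-1}$ with $\zeta$ acting as $\nabla^o_{\partial_\la}$, so that $\nabla^o_{\partial_\la}$ acts on $E^o=F^o$ as multiplication by $z^{-1}$, while $\la$ acts as $z^2\nabla^o_{\partial_z}$ (this is consistent with~\eqref{eq:Bo}, as one sees using $(\la-u_j^o)e_j=zA_\infty^o e_j$ together with $z^{-1}\la=\la z^{-1}+1$ on $V^o$, which give $z^{-1}e_j=(\la-u_j^o)^{-1}(A_\infty^o-\id_n)e_j$). Consequently $M^o=\sum_k(\nabla^o_{\partial_\la})^kE^o=\sum_{k\geq0}z^{-k}F^o=\CC[z,z^{-1}]\otimes_{\CC[z]}F^o$ as a $\CC$-vector space. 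Forming the Laplace transform $\FM^o$ and localizing amounts to inverting $\zeta=z^{-1}$, which is already invertible here, so $(G^o,\nabla^o)=\CC[z,z^{-1}]\otimes_{\CC[z]}F^o=\wt G^o$ as modules, and the connection is $\nabla^o_{\partial_z}=z^{-2}\la$, which agrees with the connection on $F^o$ (extended to $\wt G^o$), whose matrix in the canonical basis is $A^o$ by construction; the only point requiring care is the non-commutativity of $z^{-1}$ and $\la$. Alternatively, this second assertion is just the statement that the Laplace transform of the inverse Laplace lattice recovers the localized module one started from, as in~\cite[Prop.\,V.2.10]{Bibi00}.
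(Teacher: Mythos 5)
Your treatment of the first assertion is correct and is essentially the paper's argument: the residue $B_i^o=(A_\infty^o-\id_n)E_{ii}$ has rank one, its unique nonzero eigenvalue is $(A_\infty^o)_{ii}-1\notin\ZZ$, so the only integral eigenvalue is $0$ and Lemma~\ref{lem:middleext} applies.

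The second assertion is where your argument has a genuine gap. In the chain $M^o=\sum_k(\nabla^o_{\partial_\la})^kE^o=\sum_{k\geq0}z^{-k}F^o=\CC[z,z^{-1}]\otimes_{\CC[z]}F^o$, the middle equality is mere notation (writing $z^{-1}$ for $\nabla^o_{\partial_\la}$), but the last equality is precisely the nontrivial content of the statement: it asserts that the images of $e_1,\dots,e_n$ stay linearly independent over $\CC[z,z^{-1}]$ in $G^o$, i.e., that $\rk G^o=n$. The surjectivity of the natural map $\wt G^o\to G^o$ is the easy half; injectivity is not, and your consistency check $z^{-1}e_j=(\la-u_j^o)^{-1}(A_\infty^o-\id_n)e_j$ does not exclude relations among the $\nabla_{\partial_\la}^k e_j$ modulo $E^o$, which would make $\sum_{k\geq0}z^{-k}F^o$ a proper quotient of the free module. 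So the ``only point requiring care'' is not the noncommutativity of $z^{-1}$ and $\la$ but this rank count. The paper settles it via the stationary phase formula, $\rk G^o=\sum_i\dim\phi_{\la-u_i^o}M^o$, combined with the observation that, since no two distinct eigenvalues of $B_i^o$ differ by an integer, the local monodromy $\rT_i$ is conjugate to $\exp(-\twopii B_i^o)$, whence $\rk(\id-\rT_i)=\rk B_i^o=1$ and the sum equals $n$. Your eigenvalue computation for $B_i^o$ contains exactly the input needed here, but you never connect it to the rank of $G^o$. The fallback citation of \cite[Prop.\,V.2.10]{Bibi00} does not obviously close the gap either: that reference is invoked for the construction of the inverse Laplace lattice, whereas the issue is to identify the Laplace transform of the particular extension generated by $E^o$ (the middle extension, rather than, say, all of $V^o$) with $\wt G^o$ and nothing more --- which is exactly what the rank argument accomplishes.
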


\begin{proof}
Let us decompose $B^o$ as in \eqref{eq:Fuchs}. Then each matrix $B_i^o$ has rank one and a unique nonzero eigenvalue, which is the $i$-th diagonal entry of $A_\infty^o-\id$ and is non integral by the second assumption on $A_\infty^o$. Therefore, the matrix $B^o$ satisfies the assumption of Lemma \ref{lem:middleext}. This proves the first point.

Let $G^o$ be localized Laplace transform of $M^o$. Then
\[
E^o\subset M^o\implies F^o\subset G^o,\quad\text{hence }\wt G^o\subset G^o.
\]
In order to obtain equality, it is enough to show $\rk G^o =n$. By the stationary phase formula, this rank is equal to $\sum_{i=1}^n\phi_{\lambda-u_i^o}M^o$. Therefore, it is enough to show that, for each local monodromy~$\rT_i$ of $L^o =(V^o)^{\nabla^o}$ around $u_i^o$, we have $\rk(\id_n-\rT_i)=1$.

Since no two distinct eigenvalues of $B_i^o$ differ by an integer, the local monodromy~$\rT_i$ is conjugate to $\exp(-2\pi\sfi B_i^o)$, hence $\rT_i-\id$ has rank one, as desired.
\end{proof}

Since the diagonal terms of $\Lambda^o$ are pairwise distinct, we can write
\[
A_\infty^o=D_\infty^o+[\Lambda^o,R^o],
\]
where $D_\infty^o$ is the diagonal of $A_\infty^o$, and we can assume that the diagonal terms of~$R^o$ are zero.

A theorem of B.\,Malgrange \cite{Malgrange83cb, Malgrange86} asserts that there exists a universal integrable deformation of this system in the neighbourhood of $u^o$ (\cf also \cite[\S VI.3]{Bibi00}). In~particular (\cf \cite[\S VI.3.f]{Bibi00}), there exists a holomorphic matrix $R(u)$ on a simply connected neighbourhood $\nb(u^o)$ with zeros on the diagonal, such that the system
\begin{equation}\label{eq:familyBirkhoff}
\Bigl(\frac{\Lambda(u)}{z}+A_\infty(u)\Bigr)\frac{\rd z}{z},\quad A_\infty(u):=[\Lambda(u),R(u)]+D_\infty^o
\end{equation}
is integrable and $R(u^o)=R^o$. The diagonal part of $R(u)$ does not show up, so we can assume that the diagonal terms of $R(u)$ are zero. The integrable connection (on~the trivial bundle) has the matrix (\cf \cite[VI\,(3.12)]{Bibi00})
\begin{equation}\label{eq:univintegr}
-\rd(\Lambda(u)/z)+\bigl([\Lambda(u),R(u)]+D_\infty^o\bigr)\,\frac{\rd z}{z}-[\rd\Lambda(u),R(u)]
\end{equation}
and is a universal integrable deformation of its restriction at each point of the neighbourhood where it exists. Furthermore, on $\nb(u^o)$, there exists a $z$-formal base change which transforms \eqref{eq:univintegr} to the system
\begin{equation}\label{eq:univintegrform}
-\rd(\Lambda(u)/z)+D_\infty^o\,\frac{\rd z}{z}.
\end{equation}

\begin{theoreme}[\cite{C-D-G18}, \cite{Guzzetti21}]\label{th:CDG}
Assume that $A_\infty(u)$, defined on $\nb(u^o)$, extends holomorphically to $\DD^c$, and that, for any $a=1,\dots,m$ and any $i\neq j$ belonging to the same $I_a^c$, the $(i,j)$ and $(j,i)$ entries of $A_\infty(u)$ tend to zero when $u_i-u_j\to0$. Then the corresponding $(i,j)$ and $(j,i)$ entries of the Stokes matrices $(S^o_+,S^o_-)$ are zero.
\end{theoreme}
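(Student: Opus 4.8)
The plan is to bring Theorem~\ref{th:CDG} under the Criterion stated in the Introduction, applied with $U=\DD^c$ (which is simply connected, being a product of open strips, hence homeomorphic to $\CC^n$), with $U^o=\DD^c\moins\Delta$, and with $\cG$ the localization at $z=0$ of the universal integrable deformation whose connection matrix is \eqref{eq:univintegr}, once that deformation is known to extend holomorphically over $\DD^c$. By construction, and since $R(u^o)=R^o$, the restriction $\cG_{|u=u^o}$ is the system \eqref{eq:Ao} with matrix $A^o$; so the conclusion of the Criterion for this $\cG$, namely the vanishing of the $(i,j)$- and $(j,i)$-entries of its Stokes matrices for $i\neq j$ in a common $I_a^c$, is exactly the assertion of the Theorem. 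Thus everything reduces to checking the two hypotheses of the Criterion.

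The second hypothesis is exactly Lemma~\ref{lem:point2}: for $u^o\in U^o$, the localized Laplace transform $(G^o,\nabla^o)$ is $\wt G^o$ with matrix $A^o$, realized as the localized Laplace transform of the middle extension $(M^o,\nabla^o)$ of the Fuchsian system \eqref{eq:Bo}, which is a minimal extension at each of its finite singularities by Lemmas~\ref{lem:middleext} and~\ref{lem:point2}. (The proof of Lemma~\ref{lem:point2} also gives $\rk(\id_n-\rT_i)=1$ for each $i$, so by the stationary phase formula the formal pieces of $(G^o,\nabla^o)$ at $z=0$ all have rank one, and the ``blocks'' of the Criterion and of Proposition~\ref{prop:extjL} are genuine entries of the Stokes matrices.) For the first hypothesis I would argue as follows. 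For $i\neq j$ in a common $I_a^c$, the entry $(A_\infty(u))_{ij}=(u_i-u_j)R_{ij}(u)$ is holomorphic on $\DD^c$ and, by assumption, vanishes on the smooth hypersurface $\{u_i=u_j\}$; it is therefore divisible there by $u_i-u_j$, so that $R_{ij}(u)$, already holomorphic on $\DD^c$ when $i$ and $j$ lie in distinct blocks since then $u_i\neq u_j$ there, extends holomorphically over $\DD^c$, and with it the integrable connection \eqref{eq:univintegr}, its integrability persisting by analytic continuation from $\nb(u^o)$. By \cite[Prop.\,19.3]{C-D-G18} (\cf Appendix~\ref{app:A}) the $z$-formal base change carrying \eqref{eq:univintegr} to the diagonal connection \eqref{eq:univintegrform} then also extends holomorphically over $\DD^c$; since \eqref{eq:univintegrform} is $\bigoplus_i\bigl(-\rd(u_i/z)+(D_\infty^o)_i\,\rd z/z\bigr)$, this exhibits the formalization $\cO_{\DD^c}\lpr z\rpr\otimes\cG$ as $\bigoplus_i(R_i[z^{-1}],\nabla_i+\rd(u_i/z))$ with each $R_i$ of rank one and $\nabla_i$ the logarithmic connection of constant residue $(D_\infty^o)_i$, which is the first hypothesis.

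Once both hypotheses hold, the Criterion applies and gives the Theorem, via the chain recalled in the Introduction: simple connectedness of $U=\DD^c$ makes the local system on $U$ attached to the residual connection of each $(R_i,\nabla_i)$ constant (here even trivial), so by the stationary phase formula with parameters of \cite{D-S02a} (\cf Appendix~\ref{app:B}) the vanishing cycle local systems $\phi_{v_i}(j_*L)$ are constant, and Proposition~\ref{prop:extjL}, resting on Corollary~\ref{cor:vanishingStokes}, yields the vanishing of the $(i,j)$- and $(j,i)$-entries of the Stokes matrices, for every $u^o\in\DD^c\moins\Delta$ and every admissible $\theta^o$ close enough to $\theta^c$. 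The one genuinely analytic point, and the only place the assumption on the vanishing of the off-diagonal entries of $A_\infty(u)$ along the coalescence locus is really used, is the holomorphic extension over $\DD^c$ of the formal decomposition of $\cG$ at $z=0$, \ie the first hypothesis; I would borrow it from \cite{C-D-G18} rather than reprove it, the remaining steps being an assembly of the structural results of Sections~\ref{sec:criterionvanishing}--\ref{sec:shortproof}.
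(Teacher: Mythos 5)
Your proposal is correct and follows essentially the same route as the paper: extend the integrable system (and, via \cite[Prop.\,19.3]{C-D-G18}, its formal decomposition) over $\DD^c$, observe that the residual rank-one local systems are constant because $\DD^c$ is simply connected, identify them with the vanishing cycle local systems by the stationary phase formula with parameters, and conclude with Proposition \ref{prop:extjL} and Corollary \ref{cor:vanishingStokes}; your explicit divisibility argument showing that $R_{ij}$ extends holomorphically is exactly the content of the remark following the theorem. The only step you leave implicit is the construction of the global local system $L$ on $(X\times\CC_\la)\moins H$ and the verification that the inverse partial Laplace transform of the family is the middle extension $j_*L$ over all of $X$ (not only over the fibre at $u^o$), which the paper carries out using Lemma \ref{lem:std} together with an auxiliary lemma on the smoothness of $M$ away from $H$.
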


\begin{remarque}
We may restate the condition on $A_\infty(u)$ as the condition that $R(u)$ extends holomorphically to $\DD^c$.
\end{remarque}

\begin{proof}
We first note that the integrability property of \eqref{eq:univintegr} all over $\DD^c$ immediately results from that on $\nb(u^o)$. Indeed, integrability is equivalent to the property that~$R$ satisfies the following isomonodromy differential system on a neighbourhood of $u^o$:
\begin{equation}\label{eq:isomono}
\begin{cases}
\phantom{\rd}\rd [\Lambda,R]=-\Bigl[\bigl([\Lambda,R]+D_\infty^o\bigr),[\rd\Lambda,R]\Bigr],\\
\rd [\rd\Lambda,R]=[\rd\Lambda,R]\wedge[\rd\Lambda,R].
\end{cases}
\end{equation}
These are equalities on $\nb(u^o)$ between holomorphic matrices defined on $\DD^c$. Since~$\DD^c$ is connected, these equalities, hence the integrability property, hold all over $\DD^c$.

Let us set
\begin{equation}\label{eq:integrable}
\begin{split}
A(u,z)&=\Big(\frac{\Lambda(u)}{z}+[\Lambda(u),R(u)]+D_\infty^o\Big)\frac{\rd z}{z},\\
\Omega&=-\frac{\rd\Lambda(u)}{z}-[\rd\Lambda(u),R(u)].
\end{split}
\end{equation}

As explained in Lemma \ref{lem:point2}, the meromorphic bundle with connection $(G^o,\nabla^o)$ associated with the differential system of matrix $A^o$ given by \eqref{eq:Ao} is the localized Laplace transform of the middle extension $(M^o,\nabla^o)$ of the meromorphic bundle with connection $(V^o,\nabla^o)$ defined by the matrix $B^o$ given by \eqref{eq:Bo}. The existence of $A_\infty(u)$ on~$\DD^c$ together with integrability of \eqref{eq:integrable} implies the existence of a meromorphic bundle with integrable connection $(G,\nabla)$ on $\DD^c\times\CC$ restricting to $(G^o,\nabla^o)$ at $u^o$.

Let us consider the formalized connection along $z=0$:
\[
(\wh G,\wh\nabla):=\cO_{\DD^c}\lpr z\rpr\otimes_{\cO_{\DD^c}[z]}(G,\nabla).
\]
Then \cite[Prop.\,19.3]{C-D-G18} (see a reminder in Appendix \ref{app:A}) extends the formal decomposition \eqref{eq:univintegrform} for $(G,\nabla)$ all over $\DD^c$:
\begin{equation}\label{eq:wG}
(\wh G,\wh\nabla)\simeq\bigoplus_{i=1}^n\bigl(\cO_{\DD^c}\lpr z\rpr,\rd+\rd(u_i/z)+d_{\infty,i}^o\rd z/z\bigr),
\end{equation}
where $d_{\infty,i}^o$ is the $i$-th diagonal entry of the diagonal matrix $D_\infty^o$. The rank-one $\cO_{\DD^c}\lpr z\rpr$-module with connection $(\cO_{\DD^c}\lpr z\rpr,\rd+d_{\infty,i}^o\rd z/z)$ has regular singularity along $\{z=0\}$. It is uniquely determined as such by the data of the pair $(L_i,T_i)$, where $L_i=\CC_{\DD^c}$ is the \emph{constant} local system of rank one on~$\DD^c$, and $T_i$ is the automorphism of $L_i$ induced by multiplication by $\exp(-\twopii d_{\infty,i}^o)$.

Our aim is to apply Proposition \ref{prop:extjL} to a suitable local system $L$. Before doing so, we construct the local system $L'$ on $(\DD^c\times\CC_\la)\moins H$ by means of the standard result provided by Lemma \ref{lem:std}. We then define $L$ as the restriction of~$L'$ to $(X\times\CC_\lambda)\moins H$, and we prove that the local system $\phi_{v_i}(L)$ as considered in Section~\ref{sec:interpretation} is constant by identifying $\phi_{v_i}(L)$ with the restriction of $L_i$ to $X=\DD^c\moins\Delta$.

\begin{lemme}\label{lem:std}
Let $U$ be a simply connected complex manifold and let $D$ be a disc of some positive radius, centered at the origin in $\CC$. Let $u^o\in U$. The restriction functor at~$u^o$ induces an equivalence between the category of regular holonomic $\cD_{U\times D}$-modules with characteristic variety contained in \hbox{$T^*_{U\times D}(U\times D)\cup T^*_{U\times\{0\}}(U\times D)$} and the category of regular holonomic $\cD_{\{u^o\}\times D}$-modules with singularity at the origin only. A quasi-inverse functor is given by the pullback by the projection $U\times D\to \{u^o\}\times D$.\qed
\end{lemme}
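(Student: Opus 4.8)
The plan is to prove Lemma~\ref{lem:std} by reducing it to a classical statement about regular holonomic $\cD$-modules on the product $U\times D$ whose characteristic variety is as small as possible, namely contained in the union of the zero section and the conormal to $U\times\{0\}$. Such modules are precisely those that are (relative to the projection $p\colon U\times D\to U$) smooth along $U$, i.e.\ their restriction to the fibers $\{u\}\times D$ forms a ``holonomic family'' with singularity only at the origin of $D$, and which moreover have no sub- or quotient module supported on $U\times\{0\}$ other than those forced by the monodromy. Concretely, by the Riemann--Hilbert correspondence on $U\times D$, such a $\cD_{U\times D}$-module corresponds to a perverse sheaf on $U\times D$ that is locally constant on $(U\times D)\moins(U\times\{0\})$ and whose nearby/vanishing cycle data along $U\times\{0\}$ is locally constant on $U$.

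\medskip

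First I would invoke the Riemann--Hilbert correspondence to translate the whole statement into topology: the category of regular holonomic $\cD_{U\times D}$-modules with the prescribed characteristic variety is equivalent to the category of perverse sheaves on $U\times D$ constructible with respect to the stratification $\{(U\times D)\moins(U\times\{0\}),\ U\times\{0\}\}$. Such a perverse sheaf is equivalent, by the classical description of perverse sheaves on a disc relative to a point (a quiver $\psi\rightleftarrows\phi$ with the two maps $\mathrm{can}$, $\mathrm{var}$ and $\id-\mathrm{can}\,\mathrm{var}$, $\id-\mathrm{var}\,\mathrm{can}$ invertible), to the data of two local systems $\Psi,\Phi$ on $U$ together with maps $c\colon\Psi\to\Phi$, $v\colon\Phi\to\Psi$ with the usual invertibility conditions. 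Since $U$ is simply connected, every local system on $U$ is constant, so this quiver data is the same as the data of two finite-dimensional vector spaces $\Psi_0,\Phi_0$ with maps $c_0,v_0$ satisfying the invertibility conditions --- and that is exactly the same category as perverse sheaves on $\{u^o\}\times D$ with singularity only at the origin, i.e.\ regular holonomic $\cD_{\{u^o\}\times D}$-modules with singularity at the origin. The restriction functor at $u^o$ is, under these identifications, the functor forgetting the (trivial) $U$-structure, which is visibly an equivalence, with quasi-inverse the constant/pullback functor. One then checks that, under Riemann--Hilbert on each side, the topological restriction functor matches the $\cD$-module restriction $i_{u^o}^*$ up to the appropriate shift (this is the compatibility of the de Rham functor with non-characteristic restriction, and $\{u^o\}\times D$ is non-characteristic for the modules in question because their characteristic variety meets $T^*(U\times D)$ only along the zero section and $T^*_{U\times\{0\}}$), and that the pullback $p^*$ on $\cD$-modules corresponds to the constant-local-system functor on the topological side.

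\medskip

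An alternative, more algebraic route that avoids Riemann--Hilbert: one can argue directly that any regular holonomic $\cD_{U\times D}$-module $\cM$ with the given characteristic variety is, Zariski-locally on $U$, of the form $p^*\cN$ for $\cN$ on $D$. Indeed the relative de Rham complex $\mathrm{DR}_{U\times D/U}(\cM)$ is a complex of $\cO_U$-modules with flat connection whose cohomology is $\cO_U$-coherent (by the characteristic variety hypothesis the module is $p$-good and its relative characteristic variety is finite over $U$), hence locally free with flat connection, hence --- as $U$ is simply connected --- a trivial $\cO_U$-module with its trivial connection tensored by a constant. Running the Kashiwara--Malgrange $V$-filtration along $U\times\{0\}$ fiberwise shows the gluing data ($\mathrm{gr}^V$, $\mathrm{can}$, $\mathrm{var}$) is itself constant in $u$, and reconstructing $\cM$ from this data (which is legitimate precisely because the characteristic variety is no bigger than stated, so $\cM$ is determined by its $V$-graded pieces and the gluing maps) exhibits $\cM\simeq p^*(\cM|_{\{u^o\}\times D})$. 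Fully faithfulness of $i_{u^o}^*$ then follows because $\mathrm{Hom}$ between two such modules is computed by the same relative de Rham computation and is again a constant sheaf on $U$, so sections over $U$ equal the stalk at $u^o$.

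\medskip

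The main obstacle I anticipate is the bookkeeping around the restriction functor: one must be careful that $i_{u^o}^*$ on $\cD$-modules is the \emph{non-derived} restriction and lands in honest regular holonomic $\cD_{\{u^o\}\times D}$-modules (not a complex), which requires knowing $\{u^o\}\times D$ is non-characteristic --- true here by the characteristic-variety assumption, since the only component that could cause trouble is $T^*_{U\times\{0\}}(U\times D)$, whose intersection with $T^*_{\{u^o\}\times D}(U\times D)$ above $\{u^o\}\times\{0\}$ is just the single conormal point and hence still non-characteristic for the restriction to $D$. Once that point is settled, the rest is the formal translation through Riemann--Hilbert plus the simple-connectedness of $U$ forcing all the local systems in the quiver description to be constant, which makes the equivalence with the fiber category transparent. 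Because this is a standard fact, I would present only the Riemann--Hilbert plus quiver argument in a few lines and omit the verifications of compatibility of functors.
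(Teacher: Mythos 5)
Your argument is correct, and it is the standard one: the paper states Lemma \ref{lem:std} with no proof at all (the \qed marks it as a known fact), so your Riemann--Hilbert reduction to the quiver description of perverse sheaves along $U\times\{0\}$, with simple connectedness of $U$ forcing the nearby/vanishing-cycle local systems to be constant, is exactly the intended justification. Your observation that $\{u^o\}\times D$ is non-characteristic for the modules in question, so that $i_{u^o}^*$ is exact and compatible with the de Rham functor, correctly handles the only point that needs checking.
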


Furthermore, we implicitly refer to \cite[App.\,A]{D-S02a} for passing from analytic to partially algebraic $\cD$-modules.

This being understood, we conclude, by taking $U=\DD^c$, that $\FM^o$ extends in a unique way as a holonomic $\Dzeta$-module $N$ with regular singularities along $\zeta=0$ and which satisfies (by setting $z=\zeta^{-1}$)
\begin{align*}
\cO_{\DD^c}[\zeta,\zeta^{-1}]\otimes_{\cO_{\DD^c}[\zeta]}N&=(G,\nabla),\\
N_{|u^o\times\CC}&=\FM^o.
\end{align*}
The inverse partial Laplace transform $M$ of $N$ is a holonomic $\cD_{\DD^c}[\la]\langle\partial_\la\rangle$-module, and we write $N=\FM$ (\cf Appendix~\ref{app:B}).

\begin{lemme}
The $\cD_{\DD^c}[\la]\langle\partial_\la\rangle$-module $M$ is smooth away from $H$ and defines there a locally constant sheaf $L'$.
\end{lemme}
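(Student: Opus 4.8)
The plan is to show that $M$, constructed as the inverse partial Laplace transform of $N$, is a holonomic $\cD_{\DD^c}[\la]\langle\partial_\la\rangle$-module whose characteristic variety away from the divisor $H=\bigcup_i\{\la=u_i\}$ is contained in the zero section, so that by the Riemann–Hilbert correspondence it corresponds to a perverse sheaf that restricts, over $(\DD^c\times\CC_\la)\moins H$, to a locally constant sheaf $L'$ (up to shift). First I would record the known structure at the base point: by Lemma \ref{lem:point2}, $M^o$ is the middle extension of $(V^o,\nabla^o)$, which is smooth (as a $\cD$-module) on $\CC_\la\moins\{u^o\}$ with regular singularities at the $u_i^o$ and at $\infty$; hence $\FM^o = M^o$ with the twisted action, and its inverse Laplace transform $M^o$ (as a $\Cla$-module) recovers $M^o$. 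The point is to propagate this smoothness in the $u$-directions.

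Next I would analyze the characteristic variety of $M$. Since $N$ was produced via Lemma \ref{lem:std} from $\FM^o$ by pullback along $\DD^c\times D\to\{u^o\}\times D$ (after passing to partially algebraic modules via \cite[App.\,A]{D-S02a}), $N$ is relatively holonomic over $\DD^c$ and its characteristic variety in the $\zeta$-direction is controlled: it is regular along $\zeta=0$ and has an irregular singularity only at $\zeta=\infty$, uniformly in $u$. Taking the inverse partial Laplace transform $M$ of $N$, the singular locus in the $\la$-direction is the image under Laplace of the singular locus of $N$ in the $\zeta$-direction; by the standard dictionary the regular singularity of $N$ at $\zeta=\infty$ (equivalently $z=0$) with formal exponential factors $u_i/z$, as in \eqref{eq:wG}, translates into singularities of $M$ precisely along $\la=u_i$, i.e.\ along $H$. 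Concretely, one checks this fibrewise: for each $u\in\DD^c$, the restriction $M_{|u\times\CC_\la}$ is, by the compatibility of partial and full Laplace transform with restriction, the full inverse Laplace transform of $N_{|u\times\CC}$, hence a regular holonomic $\CC[\la]\langle\partial_\la\rangle$-module with singularities exactly at $u_1(u),\dots,u_n(u)$ and $\infty$ — this is essentially the content of the construction in Section \ref{sec:shortproof} combined with \cite[Prop.\,19.3]{C-D-G18} cited as \eqref{eq:wG}.

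Then I would assemble the relative information: $M$ is holonomic over $\DD^c\times\CC_\la$ (being an inverse partial Laplace transform of the holonomic module $N$), and away from $H$ every fibre $M_{|u\times\CC_\la}$ is $\cO$-coherent (a vector bundle with flat connection in $\la$), with rank independent of $u$ equal to $\rk V^o = r$, again by \eqref{eq:wG} and the stationary phase count in the proof of Lemma \ref{lem:point2}. A relatively $\cO$-coherent holonomic $\cD$-module with fibrewise constant rank on a connected base is $\cO$-coherent, hence a vector bundle with integrable connection on $(\DD^c\times\CC_\la)\moins H$; its sheaf of horizontal sections is the desired locally constant sheaf $L'$. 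Equivalently, one invokes that the characteristic variety of $M$, being stable under the symplectic structure and contained in the union of the zero section and conormals to $H$ and to $\{u=u^c\}$-type strata, must actually reduce to the zero section over the complement of $H$ once fibrewise smoothness is known.

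The main obstacle I anticipate is the bookkeeping in the Laplace dictionary with parameters: one must be careful that the partial (in $\zeta$) Laplace transform commutes with restriction to a fibre $u=\mathrm{const}$ and that the regularity of $N$ along $\zeta=0$ together with the precise exponential factors $\rd(u_i/z)$ in \eqref{eq:wG} is exactly what forces the $\la$-singularities of $M$ to lie on $H$ and nowhere else — in particular that no "extra" singular locus appears in the $u$-directions away from $H$. This is where one genuinely uses that $A_\infty(u)$, equivalently $R(u)$, extends holomorphically to all of $\DD^c$ (the hypothesis of Theorem \ref{th:CDG}), so that the family \eqref{eq:integrable} is integrable on $\DD^c\times\CC$ and $(G,\nabla)$ — hence $N$ and $M$ — really are defined over $\DD^c$ with no further singular strata. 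Once this is granted, the conclusion that $M$ is smooth away from $H$ and defines a local system $L'$ there is formal, via Lemma \ref{lem:std} and the Riemann–Hilbert correspondence.
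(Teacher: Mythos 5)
Your proposal is correct and follows essentially the same route as the paper: the substance in both cases is the fibrewise argument that, for each fixed $u\in\DD^c$, the restriction of the family is the inverse Laplace transform of a connection whose formal decomposition \eqref{eq:wG} has exponential factors $u_i/z$, so that by stationary phase its singular set at finite distance is exactly $\{u_1(u),\dots,u_n(u)\}$, and one then assembles over the (connected) base. The only difference is technical packaging: the paper first replaces $M$ by the inverse Laplace transform $\wt M$ of the localized module $G$ (the cokernel of $M\hto\wt M$ being a power of $(\cO_{\DD^c}[\la],\rd)$, hence harmless), so that $\wt M=G$ is visibly $\cO_{\DD^c}$-flat and restriction to each $u$ is exact — a point your appeal to ``compatibility of restriction with partial Laplace'' and to relative coherence glosses over, though it can be justified (also note the slip: $N$ has its \emph{irregular} singularity at $\zeta=\infty$, i.e.\ $z=0$).
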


\begin{proof}
We can regard $(G,\nabla)$ as a holonomic $\Dzeta$-module on which $\zeta$ acts in an invertible way. Let $\wt M$ denote its inverse Laplace transform. The cokernel of the inclusion $N\hto G$ is a holonomic $\Dzeta$-module supported on $\zeta=\nobreak0$. \hbox{Furthermore}, since the characteristic variety of $N$ and $G$ is as described in Lemma~\ref{lem:std}, that of $G/N$ is contained in $T^*_{\DD^c\times\{0\}}(\DD^c\times\CC_\zeta)$. As a consequence, the cokernel of $M\hto\wt M$ is a holonomic $\cD_{\DD^c}[\la]\langle\partial_\la\rangle$-module isomorphic to some power of $(\cO_{\DD^c}[\la],\rd)$. It~is thus enough to prove the lemma for $\wt M$. Recall also (\cf Appendix~\ref{app:B}) that $\wt M$ is nothing but~$G$ as a $\cD_{\DD^c}$-module, hence as an $\cO_{\DD^c}$. In particular, it is $\cO_{\DD^c}$\nobreakdash-flat and the restriction functor to any $u\in\DD^c$ only produces one cohomology $\Cla$\nobreakdash-mod\-ule $\wt M^u$, which is the inverse Laplace transform of the restriction $G^u$ to $u\times\CC^\la$. The formal stationary phase formula \eqref{eq:staphase} for $u$ fixed, together with the restriction of \eqref{eq:wG} at this value of $u$, implies that the singular set of $\wt M^u$ is equal to the finite set $\{u\}$. Since $u$ was arbitrary, this concludes the proof.
\end{proof}

From now on, we restrict to $X=\DD^c\moins\Delta$. We have $M_{|p^{-1}(u^o)}=M^o$ and~$M$ is regular holonomic, with poles on the \emph{smooth} hypersurface $H\subset X\times\CC_\la$. By~the uniqueness property of Lemma \ref{lem:std} applied to any simply connected open subset of~$X$ containing $u^o$, we conclude that $M$ is the middle extension of the meromorphic flat bundle $(V,\nabla):=\cO_X[\la,((\la-u_i)^{-1})_{i=1,\dots,n}]$ along $H$. Denoting by~$L$ the local system of horizontal sections of $(V,\nabla)$, $M$ is the regular holonomic $\cD_X[\la]\langle\partial_\la\rangle$-module associated to the (perverse) sheaf $j_*L$ via the Riemann-Hilbert correspondence.

The formalized connection $(\wh G,\wh\nabla)$ along $z=0$ is directly obtained from $(M,\nabla)$ by an operation called \emph{formal partial microlocalization} \hbox{\cite[Prop.\,1.18]{D-S02a}} (the non-characteristic assumption (NC) in \loccit\ is obviously satisfied here). We write $(\wh G,\wh\nabla)=p_*(M,\nabla)^\mu$, with the identification $z=\partial_\la^{-1}$ and $\la=z^2\partial_z$. By the standard identification recalled in Appendix \ref{app:B} (\cf Remark \ref{rem:B}), the vanishing cycle sheaf $\phi_{v_i}(j_*L)$ is a rank-one local system on $H_i\cap(X\times\CC_\la)$, identified with~$L_{i|X}$.

Since $L_i$ is constant on $\DD^c$, hence on $X$, we conclude the proof of Theorem \ref{th:CDG} by applying the criterion of Proposition \ref{prop:extjL}.
\end{proof}

\appendix
\section{A reminder of \texorpdfstring{\cite[Prop.\,19.3]{C-D-G18}}{CDG}}\label{app:A}

Let us recall the statement and proof of \cite[Prop.\,19.3]{C-D-G18} for the sake of completeness. We use the following notation. Given a square matrix $A$, we denote by~$A'$ the matrix formed of its diagonal terms, all off-diagonal terms being zero, and set $A''=A-A'$.

\begin{proposition}[{\cite[Prop.\,19.3]{C-D-G18}}]\label{prop:deformation}
In the setting of \eqref{eq:integrable}, there exists a unique $z$-formal base change
\[
\wh P(u,z)=\sum_{j\geq0}(-1)^jP_j(u)z^j,\quad P_0(u)\equiv\id,\ P_1''(u)=R(u),
\]
with $P_j(u)$ holomorphic on~$\DD^c$, such that
\begin{starequation}\label{eq:Fhat}
\begin{split}
\wh P^{-1}A(u,z)\wh P+\wh P^{-1}\partial_{z}\wh P\rd z&=\Big(\frac{\Lambda(u)}{z}+D_\infty^o\Big)\frac{\rd z}{z},\\
\wh P^{-1}\Omega\wh P+\wh P^{-1}\rd\wh P&=-\frac{\rd\Lambda(u)}{z}.
\end{split}
\end{starequation}%
\end{proposition}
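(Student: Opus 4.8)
The plan is to construct $\wh P$ recursively by expanding the two matrix equations in \eqref{eq:Fhat} in powers of $z$ and solving order by order, and then to check that the resulting coefficients $P_j(u)$ are holomorphic on $\DD^c$, using the fact that the diagonal entries of $\Lambda(u)$ stay pairwise distinct there (which is exactly the content of working on $\DD^c$, a product of strips). First I would write $\wh P=\sum_j(-1)^jP_jz^j$ and $\wh P^{-1}=\sum_j(-1)^jQ_jz^j$ with $Q_j$ determined polynomially from $P_0,\dots,P_j$, substitute into the first equation of \eqref{eq:Fhat}, and multiply through by $z$. The coefficient of $z^j$ will have the schematic form
\[
[\Lambda(u),P_{j+1}]+(\text{lower-order, already-known terms}) = D_\infty^o P_j' \text{-type terms},
\]
more precisely an equation of the shape $[\Lambda,P_{j+1}] = \Phi_j(P_0,\dots,P_j;\Lambda,A_\infty^o)$ where $\Phi_j$ is an explicit polynomial expression. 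Since $\Lambda(u)=\diag(u_1,\dots,u_n)$ with distinct entries on $\DD^c$, the operator $\mathrm{ad}_{\Lambda(u)}$ is invertible on off-diagonal matrices (its eigenvalue on the $(k,\ell)$ entry is $u_k-u_\ell\neq0$), so the off-diagonal part $P_{j+1}''$ is uniquely and holomorphically determined on $\DD^c$; the diagonal part $P_{j+1}'$ is not constrained by this equation.

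The diagonal parts are pinned down by requiring compatibility with, and then reading off, the second equation of \eqref{eq:Fhat}, the one involving $\Omega$ and $\rd$. The strategy here is the usual one for isomonodromic normal forms: integrability of the system $(A,\Omega)$ (equivalently \eqref{eq:isomono}) guarantees that once the gauge $\wh P$ is chosen to normalize the $z$-direction as in the first line, the $u$-direction automatically takes the stated form $-\rd\Lambda/z$ up to a term that must be diagonal and $z$-independent; imposing that this leftover vanishes, together with the normalization $P_1''=R(u)$, forces a first-order holomorphic ODE in $u$ for each $P_j'$, whose solution exists and is unique on the simply connected $\DD^c$ once an initial condition at $u^o$ is fixed (and at $u^o$ the classical Birkhoff/formal-normal-form statement, e.g.\ \cite[VI(3.12)]{Bibi00}, supplies the needed data). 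Concretely: differentiate the first equation of \eqref{eq:Fhat} with respect to $u$, use the second, and use $\rd A+\partial_z\Omega\,\rd z = A\wedge\Omega+\Omega\wedge A$ (flatness) to see that $\rd(\text{RHS of line 2}) $ matches, so no inconsistency arises; what remains is exactly a determination of $(P_j')$.

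Uniqueness is then immediate: at each order the off-diagonal part is forced by $\mathrm{ad}_{\Lambda}$-invertibility and the diagonal part by the ODE-plus-initial-condition, and $P_0\equiv\id$, $P_1''=R$ are imposed. The main obstacle, and the step deserving genuine care rather than a one-line dismissal, is the holomorphic extension across $\DD^c$: on a neighbourhood $\nb(u^o)$ everything is classical, but to propagate to all of $\DD^c$ one must know that the recursion never divides by something vanishing and that the auxiliary ODEs for the $P_j'$ have holomorphic solutions globally on $\DD^c$. The first is fine because $u_k-u_j\neq0$ throughout $\DD^c$ by construction of the strips $D_a^c$ (note $\DD^c\moins\Delta$ is where $\Lambda$ is regular, but the entries $u_k-u_j$ appearing here are those within a single strip, hence never zero even on the diagonal locus — wait, that is the subtle point: one needs $u_k\neq u_j$ for all $k\neq j$ in the off-diagonal inversion, which holds on $\DD^c\moins\Delta$; the claim of the Proposition is holomorphy on $\DD^c$, so one must check that the apparent poles along $\Delta$ are removable). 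That removability is precisely where the hypothesis that $A_\infty(u)$ — equivalently $R(u)$ — extends holomorphically to $\DD^c$ enters, combined with the integrability equations \eqref{eq:isomono}: these feed back to show each $\Phi_j$ and hence each $P_j$ extends holomorphically, by induction on $j$. I would carry this out as the final, most delicate induction step, after the formal recursion is in place.
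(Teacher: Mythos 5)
Your formal recursion in the $z$-direction is set up correctly for the off-diagonal parts, but two points go wrong. First, the claim that the diagonal part $P_{j+1}'$ ``is not constrained by this equation'' is false: the coefficient equation of the first line of \eqref{eq:Fhat} at the next order reads $[\Lambda,P_{j+2}]=(j+1)P_{j+1}+A_\infty''P_{j+1}+[D_\infty^o,P_{j+1}]$ with $A_\infty''=[\Lambda,R]$, and taking its diagonal part (the left-hand side and $[D_\infty^o,P_{j+1}]$ have zero diagonal, and $(A_\infty''P_{j+1})'=(A_\infty''P_{j+1}'')'$) yields the algebraic formula $P_{j+1}'=-(A_\infty''P_{j+1}'')'/(j+1)$. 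So uniqueness is already forced by the $z$-equation alone; there is no freedom left for the auxiliary ODEs in $u$ you propose to impose on the diagonal parts, and the paper's proof uses exactly this algebraic determination.

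Second, and more seriously, the step you explicitly defer --- removability along $\Delta$ of the apparent poles of $P_{j+1}''$ obtained by inverting $\mathrm{ad}_{\Lambda}$ --- is the whole content of the proposition, and ``these feed back to show each $P_j$ extends holomorphically, by induction on $j$'' is not an argument: you would need to prove that the $(k,\ell)$ entry of the right-hand side of the recursion vanishes on $\{u_k=u_\ell\}$ at every order, which is not accessible from the $z$-equation. The paper avoids the division altogether: writing the second line of \eqref{eq:Fhat} as $[\wh P,\rd\Lambda]=-z\bigl(\rd\wh P+[\rd\Lambda,R]\wh P\bigr)$ and extracting coefficients gives
\[
[P_{j+1},E_i]=-\partial P_j/\partial u_i-[E_i,R]P_j ,
\]
and knowing $[P_{j+1},E_i]$ for every $i$ recovers all off-diagonal entries of $P_{j+1}$ with no inversion of $\mathrm{ad}_\Lambda$. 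Since $R$ and (inductively) $P_j$ are holomorphic on $\DD^c$, so is $P_{j+1}''$, and then $P_{j+1}'$ by the algebraic formula above. Without this use of the $u$-equations to produce $P_{j+1}''$ directly, your induction does not close.
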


The existence on $\nb(u^o)$ of $\wh P$ satisfying \eqref{eq:Fhat} and with given~$P_1''$ is standard. Let us first prove that $\wh P$ is unique on $\nb(u^o)$ and given by recursive formulas starting from~$P_1''$. For that purpose, we will only need to know that $\wh P$ satisfies the first line of \eqref{eq:Fhat} on~$\nb(u^o)$.

Setting $A_\infty''=[\Lambda,P_1'']$, the $P_j$'s are solutions of the following recursive equations ($P_0=\id$):
\begin{equation}\label{eq:recursive}
[\Lambda,P_{j+1}]=jP_j+A_\infty''P_j+[D_\infty^o,P_j].
\end{equation}
We have
\[
[\Lambda,P_{j+1}]=[\Lambda,P''_{j+1}]=[\Lambda,P''_{j+1}]'',
\]
and similarly $[D_\infty^o,P_j]=[D_\infty^o,P''_j]''$. We also have $(A_\infty''P_j)'=(A_\infty''P''_j)'$. Since $\Lambda$ is regular on $\nb(u^o)$, $P''_{j+1}$ ($j\geq1$) is uniquely determined by $P_j$ and $A_\infty''$, hence~$P_1''$, by the relation
\[
[\Lambda,P''_{j+1}]=jP''_j+(A_\infty''P_j)''+[D_\infty^o,P''_j].
\]
On the other hand, the diagonal part of \eqref{eq:recursive} for $j+1$ ($j\geq0$) reads
\begin{equation}\label{eq:Fjdelta}
P_{j+1}'=-(A_\infty''P''_{j+1})'/(j+1),
\end{equation}
hence $P_{j+1}'$ is uniquely determined by $P''_{j+1}$ and $P_1''$ (through $A_\infty''$), thus by $P_j$ and~$P_1''$. For $P_1'$ we obtain
\begin{equation}\label{eq:Fprime1}
P_1'=-(A_\infty''R)'.
\end{equation}

Let us now show that the $P_j$'s extend to $\DD^c$. We will use that $\wh P$ also satisfies the second line of \eqref{eq:Fhat} on $\nb(u^o)$. The system of this second line on $\DD^c$ can be written~as
\[
[\wh P,\rd\Lambda]=-z\bigl(\rd\wh P+[\rd\Lambda,R]\wh P\bigr).
\]
Denoting by $E_i$ the matrix having $(E_i)_{ab}=1$ if $a=b=i$ and $0$ otherwise, so that $\rd\Lambda=\sum_iE_i\,\rd u_i$, these equations are written
\[
[P_{j+1},E_i]=-\partial P_j/\partial{u_i}-[E_i,R]P_j.
\]
By \eqref{eq:Fprime1}, $P_1'$ (hence $P_1$) extends holomorphically to $\DD^c$. For $j\geq1$, let us assume that $P_1,\dots,P_j$ extend holomorphically to $\DD^c$. Then so does $[P_{j+1},E_i]$ for each $i$, therefore $P''_{j+1}$ also, hence $P_{j+1}'$ also, according to \eqref{eq:Fjdelta}.\qed

\section{A reminder on partial Laplace transformation, \texorpdfstring{\\partial}{partial} microlocalization and vanishing cycles}\label{app:B}
We recall here, following the method of \cite[\S1]{D-S02a}, how the formal stationary phase formula with parameters can be expressed in terms of vanishing cycles, by means of formal partial microlocalization. We take up the setting and notation of Section~\ref{sec:interpretation} with $X=\DD^c\moins\Delta$.

Let $(M,\nabla)$ be any regular holonomic $\cD_{\DD^c}[\la]\langle\partial_\la\rangle$-module. Recall that the \emph{partial Laplace transform} $\FM$ of $M$ is the $\cD_{\DD^c}[\zeta]\langle\partial_\zeta\rangle$-module which is equal to $M$ as a $\cD_{\DD^c}$\nobreakdash-module and on which $\zeta$ acts as $\partial_\la$ and $\partial_\zeta$ as $-\la$. It is holonomic with singularities along $\DD^c\times\{\zeta=0\}$. From $\FM$ we recover $M$ by \emph{inverse partial Laplace transformation}. Let us set $G=\cO_{\DD^c}[\zeta,\zeta^{-1}]\otimes_{\Ozeta}\FM$ and $z=\zeta^{-1}$. Then $G$ is a locally free $\cO_{\DD^c}[z,z^{-1}]$-module of finite rank with connection $\nabla$ having singularity along $z=0$.

Let $(\wh G,\wh\nabla)$ denote the formalization $\cO_{\DD^c}\lpr z\rpr\otimes_{\cO_{\DD^c}[z,z^{-1}]}(G,\nabla)$. We now implicitly restrict $M$ to $X\times\CC_\la$ and $G$ to $X\times\CC_z$. We will recall the proof of the following ``formal stationary phase formula with parameters''.

\begin{proposition}\label{prop:B}
Assume that the characteristic variety of $M$ is contained in the union of the zero section of $T^*(X\times\CC_\la)$ and the conormal bundles of the smooth hypersurfaces $H_i=\{v_i=0\}$ ($v_i:=\lambda-u_i$). Then $(\wh G,\wh\nabla)$ decomposes~as
\begin{starequation}\label{eq:B}
\bigoplus_{i=1}^n(R_i[z^{-1}],\nabla_i+\rd(u_i/z)),
\end{starequation}%
where $(R_i,\nabla_i)$ is a locally free $\cO_X\lcr z\rcr$-module of finite rank with logarithmic connection~$\nabla_i$, and the locally free $\cO_X$-module $R_i/zR_i$, equipped with the residual connection~$\nabla_i^\res$, is~isomorphic to $(\phi_{v_i}M,\nabla^\res)$.
\end{proposition}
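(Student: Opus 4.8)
The plan is to reduce the statement to the local analytic situation already treated in \cite[\S1]{D-S02a} and import the formal stationary phase formula from there, keeping track of the extra parameter~$u\in X$. The hypothesis on the characteristic variety of $M$ is exactly the non-characteristicity condition (NC) of \loccit, so the machinery of \emph{formal partial microlocalization} applies: one forms $p_*(M,\nabla)^\mu$, which by \cite[Prop.\,1.18]{D-S02a} is canonically identified with $(\wh G,\wh\nabla)$ via $z=\partial_\la^{-1}$, $\la=z^2\partial_z$. Thus the first step is to rewrite $(\wh G,\wh\nabla)$ as this microlocalization and to record that the operation is local along the $\la$-line.

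Next I would localize the computation near each component $H_i=\{v_i=0\}$. Since $H=\bigcup_i H_i$ is a \emph{smooth} (not merely normal crossing) hypersurface once we have restricted to $X=\DD^c\moins\Delta$, the branches $H_i$ are pairwise disjoint over $X$, and $v_i=\la-u_i$ is a global defining function with $dv_i$ nowhere zero. Over a neighbourhood of $H_i$ the only singular locus of $M$ is $H_i$ itself, and $M$ there is (up to the contribution of the regular part along the zero section, which microlocalizes to zero) the regular holonomic module attached to a local system with a single monodromy $T_i$. The local formal stationary phase formula of \cite{D-S02a} then produces, for the $H_i$-contribution, a summand of $(\wh G,\wh\nabla)$ of the form $(R_i[z^{-1}],\nabla_i+\rd(u_i/z))$: the exponential factor $\rd(u_i/z)$ comes precisely from the critical value $v_i$ of the phase being $\la=u_i$ (equivalently, from $e^{\la/z}=e^{u_i/z}\cdot e^{v_i/z}$), and $(R_i,\nabla_i)$ is a lattice with logarithmic connection whose residue is governed by the vanishing cycles. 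The different $u_i$ being pairwise distinct on $X$, the formal irregular parts $u_i/z$ are pairwise distinct, so these summands are the blocks of the Levelt--Turrittin decomposition and the decomposition $\bigoplus_{i=1}^n$ is forced.

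It remains to identify the residual data. Here I would invoke the second half of \cite[Prop.\,1.18]{D-S02a} (or the explicit computation in \loccit), which says that the residue of $\nabla_i$ on $R_i/zR_i$ is the monodromy filtration datum of the \emph{vanishing cycle} module $\phi_{v_i}M$ along $H_i$: concretely, $R_i/zR_i\simeq\phi_{v_i}M$ as $\cO_X$-modules and the residual connection $\nabla_i^\res$ corresponds to $\nabla^\res$ on $\phi_{v_i}M$. Because $M$ is regular holonomic with a \emph{smooth} polar hypersurface over $X$, $\phi_{v_i}M$ is a vector bundle with flat connection (the local system $\phi_{v_i}(j_*L)$ of \eqref{eq:phi}), so $(\phi_{v_i}M,\nabla^\res)$ is meaningful and the identification is an isomorphism of flat bundles. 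Assembling the $n$ local contributions over all of $X$ — which is legitimate since $X$ is where $H$ is smooth and the $H_i$ are disjoint — yields \eqref{eq:B}.

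The main obstacle I anticipate is not the formal algebra, which is essentially quoted from \cite{D-S02a}, but the bookkeeping needed to see that everything works \emph{uniformly in the parameter}~$u$ and that passing between the analytic and the partially algebraic (in $\la$, resp.\ $z$) pictures costs nothing. This is handled by the non-characteristic hypothesis together with the remark, already used in Section~\ref{sec:shortproof}, that one may refer to \cite[App.\,A]{D-S02a} for the analytic-to-algebraic comparison; the flatness of $G$ (hence of $\wt M$) over $\cO_X$ that was established in the proof of the smoothness lemma guarantees that restriction to each $u\in X$ commutes with all the operations involved, so the fibrewise formal stationary phase formula \eqref{eq:staphase} globalizes to \eqref{eq:B}.
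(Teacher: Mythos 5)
Your overall strategy coincides with the paper's: identify $(\wh G,\wh\nabla)$ with the formal partial microlocalization $p_*M^\mu$ via \cite[Prop.\,1.18]{D-S02a}, localize near each $H_i$ (which are disjoint and smooth over $X$), observe that twisting by $e^{-u_i/z}$ turns the microlocalized lattice into a logarithmic object $(R_i,\nabla_i)$, and identify $R_i/zR_i$ with $\phi_{v_i}M$. That last identification, and the reason the exponential factor is $\rd(u_i/z)$, are handled as in the paper.

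There is, however, one genuine gap: the completeness of the decomposition. A priori the construction only yields coherent $\cO_X\lcr z\rcr$-submodules $R_i=p_*(V^{-1}M_{|U_i})^\mu$ of $\wh G$ (after twist), and one must show that together they generate all of $\wh G$. Your appeal to "the local formal stationary phase formula of \cite{D-S02a}" plus fibrewise Levelt--Turrittin uniqueness is close to assuming what is being proved -- the parametrized stationary phase formula \emph{is} Proposition \ref{prop:B} -- and fibrewise uniqueness does not by itself globalize in $u$. The paper's route is a rank count: $\sum_i\rk R_i=\rk\bigl(\bigoplus_i\phi_{v_i}M\bigr)=\rk G$, after which \cite[Prop.\,1.20]{D-S02a} delivers \eqref{eq:B}. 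Making that rank count work requires a preliminary reduction you omit entirely: the inverse partial Laplace transform of $G$ is $M[\partial_\la^{-1}]$, not $M$, so one first checks that $V^k(M[\partial_\la^{-1}])=\partial_\la^kV^{-1}(M[\partial_\la^{-1}])$ and that $M\to M[\partial_\la^{-1}]$ induces isomorphisms on each $\phi_{v_i}$; only then does $\gr_V^{-1}M$ have rank equal to $\rk G$. Your parenthetical "up to the contribution of the regular part along the zero section, which microlocalizes to zero" gestures at this but does not replace the argument. The rest of your write-up (uniformity in $u$ via flatness, the analytic-to-algebraic comparison) is consistent with the paper.
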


Let $V^\cbbullet M$ denote the Kashiwara-Malgrange filtration of~$M$ along~$H$, that we consider here as indexed by integers. Due to our assumption on the singularity of $M$, each $V^kM$ is a coherent $\cO_X[\la]$-module. The connection $\nabla$ acts on each $V^kM$ with logarithmic poles along~$H$ and its residue has eigenvalues with real parts in $[k,k+1)$. The vanishing cycle module $\phi_HM$ is by definition the quotient $\gr_V^{-1}M=V^{-1}M/V^0M$. It is a locally free $\cO_H$\nobreakdash-module equipped with the residual integrable connection that we denote by $\nabla^\res$. Since $\gr_V^{-1}M$ is supported on $H=\coprod_iH_i$, $p_*\gr_V^{-1}M$ decomposes as $\bigoplus_i(\phi_{v_i}M,\nabla^\res)$. Since the projection $p:X\times\CC_\la\to X$ induces an isomorphism $H_i\simeq X$, we can regard each $(\phi_{v_i}M,\nabla^\res)$ as a locally free $\cO_X$-module with integrable connection. This explains why there can be an identification between $(\phi_{v_i}M,\nabla^\res)$ and $(R_i/zR_i,\nabla_i^\res)$.

\begin{remarque}\label{rem:B}
If $M$ is the regular holonomic $\cD_X[\la]\langle\partial_\la\rangle$-module associated to $j_*L$ by the Riemann-Hilbert correspondence, then, due to the compatibility between taking vanishing cycles and the Riemann-Hilbert functor (\cf\eg\cite{Bibi87,M-M04}), $(\phi_{v_i}M,\nabla^\res)$ corresponds to $\phi_{v_i}(j_*L)$, and Proposition \ref{prop:B} provides the claim in the last part of the proof of Theorem~\ref{th:CDG}.
\end{remarque}

\begin{proof}[Proof of Proposition \ref{prop:B}]
Let $M[\partial_\la^{-1}]$ be the regular holonomic $\cD_X[\la]\langle\partial_\la\rangle$-module whose partial Laplace transform is exactly $G$ (\ie $M[\partial_\la^{-1}]$ is the inverse Laplace transform of $G$) and let $V^\cbbullet(M[\partial_\la^{-1}])$ its Kashiwara-Malgrange filtration along~$H$. We~claim that, for any $k\in\ZZ$, $V^k(M[\partial_\la^{-1}])=\partial_\la^kV^{-1}(M[\partial_\la^{-1}])$ and that the natural morphism $M\to M[\partial_\la^{-1}]$ induces an isomorphism $(\phi_{v_i}M,\nabla^\res)\isom(\phi_{v_i}(M[\partial_\la^{-1}]),\nabla^\res)$ for each $i=1,\dots,n$. Due to the uniqueness of the Kashiwara-Malgrange filtration, we can work on an analytic neighbourhood $U_i$ of $H_i$ in $X\times\CC_\la$, that the flat functor $\cO_{U_i}\otimes_{\cO_X[\la]}({\scriptstyle\bullet})$ preserves the Kashiwara-Malgrange filtration. Then the claim follows from the standard properties of this filtration (\cf\eg\cite{Bibi87,M-M04}). As a consequence, we can assume that $M=M[\partial_\la^{-1}]$, and $\bigoplus_i(\phi_{v_i}M,\nabla^\res)\simeq(\gr^{-1}_VM,\nabla)$ has rank equal to the rank of $M$ which is also equal to the rank of $G$.

By \cite[Prop.\,1.18]{D-S02a}, $(\wh G,\wh\nabla)$ is identified with the partial microlocalized module $p_*M^\mu$, as defined in \loccit, and the microlocalized lattice $p_*(V^{-1}_M)^\mu$ is a coherent $\cO_X\lcr z\rcr$-module which is the direct sum of the components $p_*(V^{-1}M_{|U_i})^\mu$ for $i=1,\dots,n$, since $(V^{-1}M_{|U_i})^\mu$ is supported on $H_i$. By definition of the $V$-filtration, $V^{-1}M_{|U_i}$ is acted on by $\partial_\la\cdot(\la-u_i)$, $\partial_\la+\partial_{u_i}$ and $\partial_{u_j}$ for $j\neq i$. This is seen by changing the variables $(u_i,u_{j\neq i},\la)\mto(v_i=\la-u_i,u_{j\neq i},\la)$.

As for the corresponding actions on the partial Laplace transform, the action of~$z$ on $p_*M^\mu$ is that induced by $\partial_\la^{-1}$ and the action of $\partial_z$ by $\la^2\partial_\la$. Then $p_*(V^{-1}M_{|U_i})^\mu$ is acted on by $z\partial_z-u_i/z$, $z^{-1}+\partial_{u_i}$ and $\partial_{u_j}$ for $j\neq i$, operators which also read
\[
e^{-u_i/z}\cdot z\partial_z\cdot e^{u_i/z},\quad e^{-u_i/z}\cdot \partial_{u_i}\cdot e^{u_i/z},\quad e^{-u_i/z}\cdot \partial_{u_j}\cdot e^{u_i/z}\quad\text{for $j\neq i$}.
\]
In other words, if we twist the connection by $e^{-u_i/z}$, $p_*(V^{-1}M_{|U_i})^\mu$ becomes a cohe\-rent $\cO_X\lcr z\rcr$-module with logarithmic connection $(R_i,\nabla_i)$ having poles along $z=0$ only. As a consequence, $R_i$ is $\cO_X\lcr z\rcr$-locally free of finite rank. By the computation of the rank aforementioned, we find $\sum_i\rk R_i=\rk G$, and \cite[Prop.\,1.20]{D-S02a} applies, giving the decomposition \eqref{eq:B}.

Last, the quotient $R_i/zR_i$ with its residual connection $\nabla_i^\res$ is identified with $p_*(V^{-1}M_{|U_i}/\partial_\la^{-1}V^{-1}M_{|U_i})^\mu$. Because $V^{-1}M_{|U_i}/\partial_\la^{-1}V^{-1}M_{|U_i}=\phi_{v_i}M$ is supported on $H_i$, microlocalization does not change it, and we thus identify $(R_i/zR_i,\nabla_i^\res)$ with $(\phi_{v_i}M,\nabla^\res)$.
\end{proof}

\backmatter
\providecommand{\SortNoop}[1]{}\providecommand{\sortnoop}[1]{}\providecommand{\eprint}[1]{\href{http://arxiv.org/abs/#1}{\texttt{arXiv\string:\allowbreak#1}}}\providecommand{\hal}[1]{\href{https://hal.archives-ouvertes.fr/hal-#1}{\texttt{hal-#1}}}\providecommand{\tel}[1]{\href{https://hal.archives-ouvertes.fr/tel-#1}{\texttt{tel-#1}}}\providecommand{\doi}[1]{\href{http://dx.doi.org/#1}{\texttt{doi\string:\allowbreak#1}}}
\providecommand{\bysame}{\leavevmode ---\ }
\providecommand{\og}{``}
\providecommand{\fg}{''}
\providecommand{\smfandname}{\&}
\providecommand{\smfedsname}{\'eds.}
\providecommand{\smfedname}{\'ed.}
\providecommand{\smfmastersthesisname}{M\'emoire}
\providecommand{\smfphdthesisname}{Th\`ese}


\begin{thebibliography}{DHMS20}

\bibitem[CDG19]{C-D-G18}
{\scshape G.~Cotti, B.~Dubrovin {\normalfont \smfandname} D.~Guzzetti} -- {\og
  Isomonodromy deformations at an irregular singularity with coalescing
  eigenvalues\fg}, \emph{Duke Math.~J.} \textbf{168} (2019), no.~6,
  p.~967--1108.

\bibitem[DHMS20]{D-H-M-S17}
{\scshape A.~D'Agnolo, M.~Hien, G.~Morando {\normalfont \smfandname} C.~Sabbah}
  -- {\og {Topological computation of some Stokes phenomena on the affine
  line}\fg}, \emph{Ann. Inst. Fourier (Grenoble)} \textbf{70} (2020), no.~2,
  p.~739--808.

\bibitem[DS03]{D-S02a}
{\scshape A.~Douai {\normalfont \smfandname} C.~Sabbah} -- {\og {Gauss-Manin
  systems, Brieskorn lattices and Frobenius structures~(I)}\fg}, \emph{Ann.
  Inst. Fourier (Grenoble)} \textbf{53} (2003), no.~4, p.~1055--1116.

\bibitem[Guz21]{Guzzetti21}
{\scshape D.~Guzzetti} -- {\og Isomonodromic {L}aplace transform with
  coalescing eigenvalues and confluence of {F}uchsian singularities\fg},
  \emph{Lett. Math. Phys.} \textbf{111} (2021), no.~3, Paper No.\,80, 70\,p.

\bibitem[MM04]{M-M04}
{\scshape {\relax Ph}.~Maisonobe {\normalfont \smfandname} Z.~Mebkhout} -- {\og
  {Le th{\'e}orème de comparaison pour les cycles {\'e}vanescents}\fg}, in
  \emph{{{\'E}l{\'e}ments de la th{\'e}orie des systèmes diff{\'e}rentiels
  g{\'e}om{\'e}triques}}, S{\'e}minaires \& Congrès, vol.~8, Soci{\'e}t{\'e}
  Math{\'e}matique de France, Paris, 2004, p.~311--389.

\bibitem[Mal83]{Malgrange83cb}
{\scshape B.~Malgrange} -- {\og {D{\'e}formations de systèmes
  diff{\'e}rentiels et microdiff{\'e}rentiels}\fg}, in \emph{{S{\'e}minaire
  E.N.S. Math{\'e}matique et Physique}} (L.~Boutet~{de Monvel}, A.~Douady
  {\normalfont \smfandname} J.-L. Verdier, \smfedsname), Progress in Math.,
  vol.~37, Birkh{\"a}user, Basel, Boston, 1983, p.~351--379.

\bibitem[Mal86]{Malgrange86}
\bysame , {\og {Deformations of differential systems, II}\fg},
  \emph{J.~Ramanujan Math. Soc.} \textbf{1} (1986), p.~3--15.

\bibitem[Mal91]{Malgrange91}
\bysame , \emph{{\'E}quations diff{\'e}rentielles {\`a} coefficients
  polynomiaux}, Progress in Math., vol.~96, Birkh{\"a}user, Basel, Boston,
  1991.

\bibitem[Sab87]{Bibi87}
{\scshape C.~Sabbah} -- {\og {{$\mathcal{D}$}-modules et cycles {\'e}vanescents
  (d'après B.~Malgrange et M.~Kashiwara)}\fg}, in \emph{G{\'e}om{\'e}trie
  alg{\'e}brique et applications (La Rábida, 1984)}, vol. III, Hermann, Paris,
  1987, p.~53--98.

\bibitem[Sab02]{Bibi00}
\bysame , \emph{{D{\'e}formations isomonodromiques et vari{\'e}t{\'e}s de
  Frobenius}}, Savoirs Actuels, CNRS~{\'E}ditions \& EDP~Sciences, Paris, 2002,
  English Transl.: Universitext, Springer \& EDP~Sciences, 2007.

\end{thebibliography}
\end{document}